\newtheorem{theorem}{Theorem}
\numberwithin{theorem}{section}
\newtheorem{lemma}[theorem]{Lemma}
\newtheorem{rmk}{Remark}
\newcommand{\A}{{\mathbb A}}
\newcommand{\Z}{\mathbb{Z}}
\newcommand{\R}{\mathbb{R}}
\newcommand{\G}{\mathcal{G}}
\renewcommand{\H}{\mathbb{H}}
\renewcommand{\Im}{\mathrm{Im}}
\newcommand{\eps}{\varepsilon}
\newcommand{\Arg}{\mathrm{Arg}}
\newcommand{\be}{\begin{equation}}
\newcommand{\ee}{\end{equation}}
\newcommand{\old}[1]{}
\newcommand\blfootnote[1]{%
  \begingroup
  \renewcommand\thefootnote{}\footnote{#1}%
  \addtocounter{footnote}{-1}%
  \endgroup
}
\begin{document}
\title{Limit shapes from harmonicity: dominos and the five vertex model}
\author{Richard Kenyon\thanks{Department of Mathematics, Yale University, New Haven CT 06520; richard.kenyon at yale.edu. Research supported by NSF DMS-1940932 and the Simons Foundation grant 327929.} \and Istv\'an Prause\thanks{
Department of Physics and Mathematics, University of Eastern Finland, P.O. Box 111, 80101 Joensuu, Finland; istvan.prause at uef.fi. Research supported by Academy of Finland grant 355839 and V\"ais\"al\"a foundation travel grant.}}

\date{}
\maketitle
\abstract{
We discuss how to construct limit shapes for the domino tiling model (square lattice dimer model)
and $5$-vertex model, in appropriate polygonal domains. Our methods are based on the harmonic
extension method of [R. Kenyon and I. Prause, Gradient variational problems in $\R^2$, Duke Math J. 2022].}

\blfootnote{2010 \emph{Mathematics Subject Classification}: Primary 49Q10,  35C99; Secondary 82B20.}

\section{Introduction}

For a convex function $\sigma:\R^2\to\R\cup\{\infty\}$, called the \emph{surface tension}, 
the associated \emph{gradient variational problem} is the problem of minimizing the surface tension integral for functions $h:U\to\R$ on a domain $U\subset\R^2$
\be\label{st}\min_h\iint_U \sigma(\nabla h(x,y))\,dx\,dy,~~~~~~h|_{\partial U} = h_0.\ee
In other words this is the problem of finding
a function $h:U\to\R$
with prescribed boundary values $h|_{\partial U} = h_0$ and minimizing the surface tension integral. 

Well-known examples of gradient variational problems are the Dirichlet problem for the Laplacian, where $\sigma(\nabla f) = |\nabla f|^2$,
and the minimal surface equation, where
$\sigma(\nabla f) = \sqrt{1+|\nabla f|^2}.$

In statistical mechanics, many \emph{limit shape problems} are gradient variational problems, for example
the limit shapes for dimers \cite{CKP} and the five- and six-vertex models \cite{dGKW, PR}. 

In \cite{KP1} we showed that any gradient variational problem (\ref{st}) can be reduced to a harmonic extension problem:
any surface tension minimizer is the envelope of a family of planes in $\R^3$ whose
defining coefficients are \emph{$\kappa$-harmonic functions} of the intrinsic (isothermal) coordinate $z$. 
Here $\kappa$ is the square root of the determinant of the Hessian $\text{Hess}(\sigma)$ with respect to the intrinsic coordinate;
by \emph{$\kappa$-harmonic function} we mean a function $f: \Omega \to\R$, $\Omega \subset \R^2$ satisfying
\be\label{kappa}\nabla\cdot \kappa\nabla f = 0.\ee
This is called the inhomogeneous-conductance Laplace equation \cite{AIM}, and is the analog of a harmonic 
function in the presence of a varying positive conductance field $\kappa=\kappa(x,y)$. 

In the current paper we apply this procedure to get explicit limit shapes for two particular statistical mechanics models:
the square grid dimer model (domino tiling model) and $5$-vertex model. 

Limit shapes for the dimer model were originally discussed by Cohn, Kenyon and Propp in \cite{CKP} and Kenyon and Okounkov in \cite{KO1}. See also more recent work by Astala, Duse, Prause, and Zhong in \cite{ADPZ}, where general results are obtained about geometry and regularity of limit shapes. 
An explicit construction for limit shapes 
was given for a particular dimer model, the honeycomb dimer model, in \cite{KO1}. This is,
in a well-defined sense, the simplest dimer model. 
Extending the ideas of \cite{KO1} to other dimer models such as the square grid dimer model, however, 
has been problematic until recently, primarily 
due to two difficulties: higher degree spectral curves and the problem of ``matching parameters". The work \cite{ADPZ} does apply to higher degree spectral curves and indeed for quite general dimer models but it is of limited value in finding a limit shape matching a given set of boundary conditions.
Using the recently developed method of \cite{KP1} of harmonic extension we
can now carry out this matching procedure for a larger class of dimer models:
the so-called isoradial, or genus-zero cases (which include the domino cases). 
Note that the recent work \cite{BB} solves the limit shape problem for more general higher genus dimer
models but only for a specific domain, the ``Aztec diamond''. For comparison, we illustrate our method on a genus one example in Section \ref{se:aztecfortress}.

The dimer model has the special feature that $\kappa$ of equation (\ref{kappa}) is constant,
so $\kappa$-harmonic functions are simply harmonic functions. Moreover for the standard polygonal domains (defined below),
we have \emph{a priori} knowledge about the tangent planes to the limit shapes along $\partial U$, so the corresponding harmonic
extension problem can be quite explicitly carried out. This is explained in Section \ref{dominoscn} below, and an explicit worked example is given.

The $5$-vertex model is a generalization of the dimer model for which $\kappa$ is not constant;
nonetheless it has a special property, called the ``trivial potential" property \cite{KP1}, that $\sqrt{\kappa}$ is a harmonic function
of the intrinsic coordinate $z$. This implies \cite{KP1} that a $\kappa$-harmonic function
is the ratio of a harmonic function and a certain fixed function $\theta=\theta(z)$. This fact allows us to likewise give an essentially complete
limit shape theory for the five-vertex model, again for the appropriate polygonal domains.

We show in the last section, Section \ref{4vtx}, that the 4-vertex model of \cite{burenev2023arctic} also has trivial potential, and in fact its
limit shapes are obtained from the lozenge dimer model limit shapes by a linear transformation.

We should mention here that limit shapes for most of the basic statistical mechanical models are not known, 
even for the six-vertex model. To find limit shapes one needs an explicit expression for the surface tension, 
or free energy, for general parameters, and this is not usually available, especially for ``non-free fermionic'' models. 
The method presented in this paper applies in principle also for the six-vertex model, replacing harmonic functions 
with $\kappa$-harmonic functions. The key difficulty is that $\kappa$ is not known in this case. 
Finally, we should mention the effect of $\kappa$ on fluctuations: we expect these to be described 
by an \emph{inhomogeneous} Gaussian free field, see \cite{BD}, associated to the $\kappa$-Laplacian 
operator of \eqref{kappa} in the intrinsic complex structure given by the $z$-coordinate.

\old{
We show furthermore that 
there is a surprising relation between domino tiling limit shapes and $5$-vertex model limit shapes:
there is a certain ``duality-and-translation'' operation transforming the arctic curve of the domino limit shape to that of the $5$-vertex limit shape on an associated domain.  
}

\section{Domino limit shapes}\label{dominoscn}

See \cite{Kenyon.lectures} for background on domino tilings and dimers in general. 
We discuss here the square grid dimer model, or domino tiling model. This material can be easily extended to the case of general periodic isoradial dimers (defined in \cite{K.isoradial}), however for simplicity of notation we deal with only the domino case here.

The features of the domino model we need for the current discussion are the following.
\begin{enumerate} 
\item Let $N = [0,1]^2$, the ``Newton polygon" (see \cite{KOS}). The surface tension $\sigma=\sigma(s,t)$ of (\ref{st}) 
is analytic on $N$, $+\infty$ outside of $N$, and strictly convex on the interior of $N$ (the so-called \emph{rough phase}, or \emph{liquid phase}). Thus our minimizers $h$
have gradient $\nabla h=(s,t)$ constrained to lie in $N$.
\item The Hessian of $\sigma$ defines on $N$ a Riemannian metric \cite{KP1}. The intrinsic coordinate $z$ (the conformal coordinate) for this metric can and will be chosen to be parameterized by the upper half plane: $z\in\H$. There is another intrinsic coordinate $w$, where $w\in\bar\H$, the lower half plane, related to $z$ through the ``spectral curve" $P(z,w) = 1+z+w-zw=0$.
See Figure \ref{dominozw}.
\item For $(s,t)\in N$ the relation between $s,t$ and $z$ is 
\be\label{stdef}(s,t) = \frac{1}{\pi}(\pi-\arg z,\pi+\arg w).\ee
See Figure \ref{dominozw}.
\end{enumerate}

\begin{figure}
\begin{center}\includegraphics[width=2in]{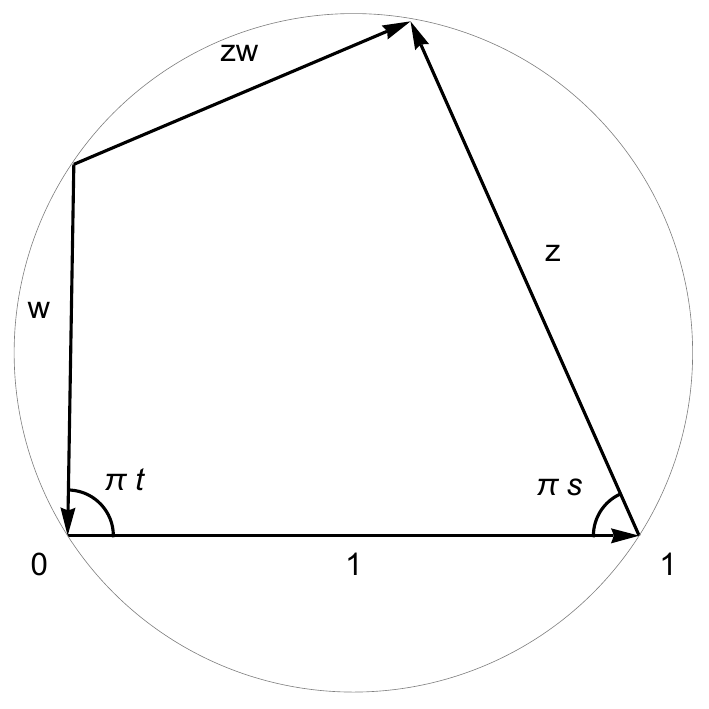}\end{center}
\caption{\label{dominozw}Relation between $z,w,s,t$ for dominos. Given $z\in\H$, draw a circle passing through $0,1$ and $1+z$. Then $w,s,t$ are determined as shown.}
\end{figure} 

We orient edges of $N$ counterclockwise, and label them cyclically $1,2,3,4$ with $1$ being the lower edge.
Let $R_0\subset\R^2$ be a polygon with $4n$ sides, with edges orthogonal to those of $N$, and occurring in 
\emph{clockwise} order around the boundary. That is,
edges of $R_0$ are oriented clockwise around $R_0$ and 
are labelled $1,2,3,4$ cyclically: $1,2,3,4,1,2,3,4,\dots$, with edges
of label $i$ associated to, and orthogonal to, edge $i$ of $N$. See Figure \ref{dominorgn}
for an example. Generally we can allow some edges of $R_0$ to have length $0$. 

Let $\G=e^{i\pi/4}\Z^2$, the square grid rotated by $\pi/4$.
We consider dimer covers of regions $R=R_\eps$ in $\eps\G$, approximating $R_0$ as $\eps\to0$, such that
edges of $R$ consist of zig-zag paths of $\eps\G$
as in Figure \ref{dominorgn}. For example a ``vertical boundary" of $R$ of type $3$ corresponds to an alternating sequence
of NW- and NE-edges of $\G$. 
\begin{figure}
\begin{center}\includegraphics[width=2.2in]{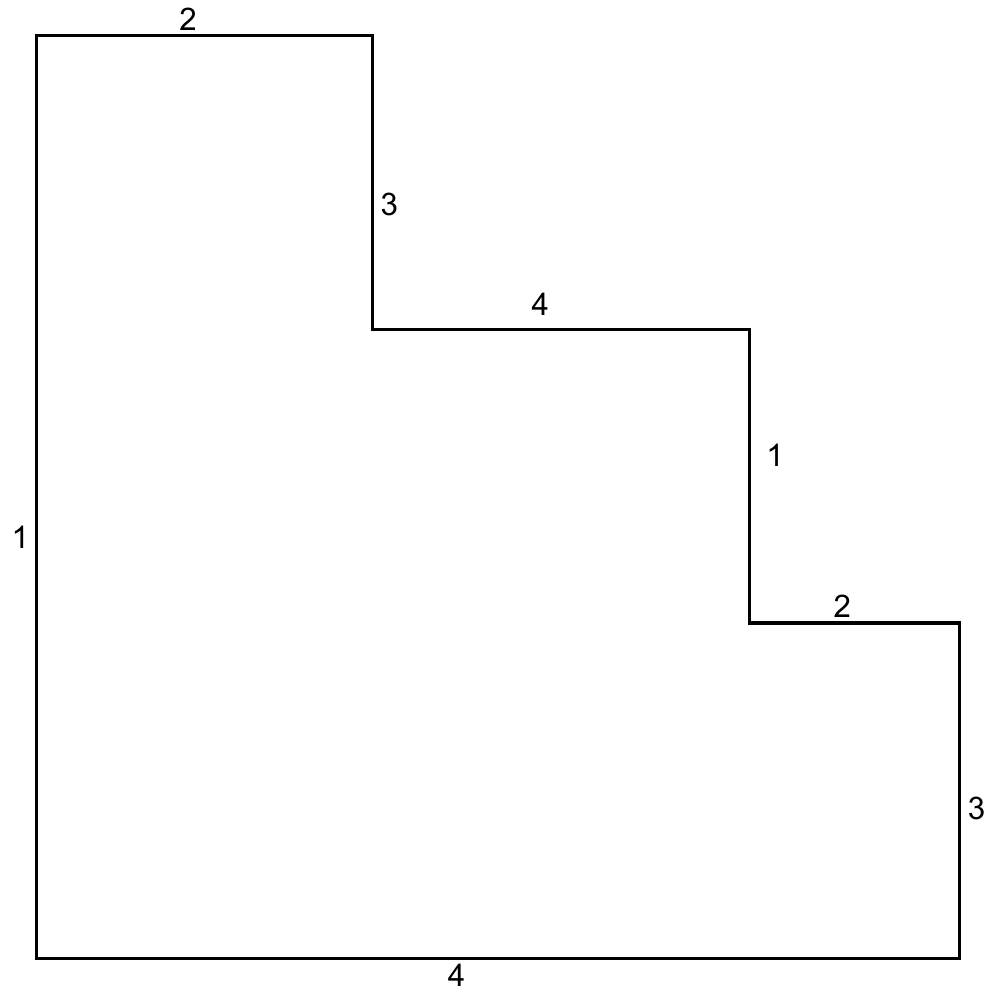}\hskip1cm\includegraphics[width=2.5in]{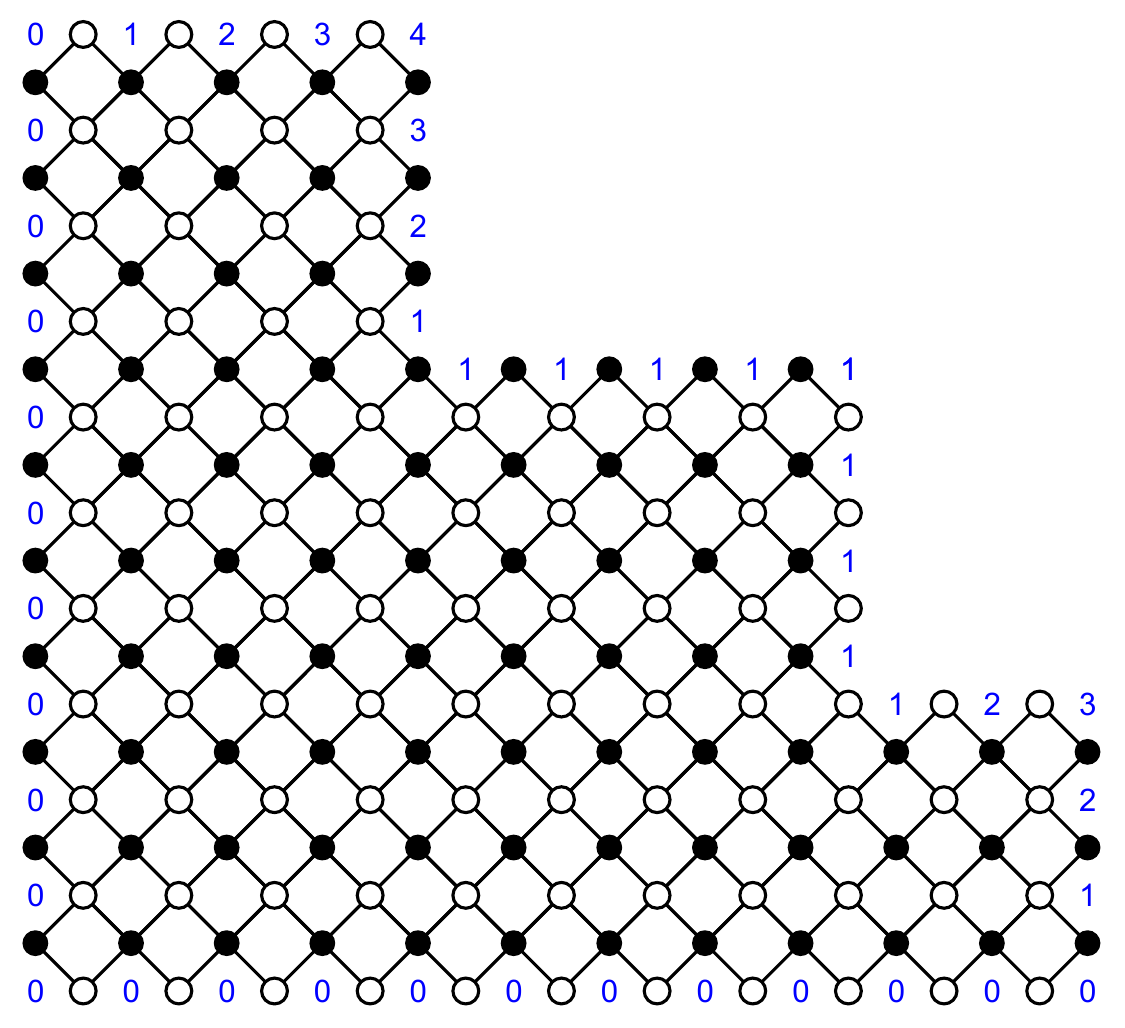}\end{center}
\caption{\label{dominorgn}Polygon $R_0$ and approximating graph $\G$ with boundary height function (in blue).}
\end{figure} 
An edge of $R$ of type $1$ is vertical and either oriented upwards or downwards (when moving clockwise around $R$); 
if upwards it has black vertices on the outside and if downwards it has white vertices on the outer side of $R$, as in Figure \ref{dominorgn}. 
Similar conditions hold for the other types of edges.
To each corner of $R$ is associated a vertex of $N$. 

The height function\footnote{This version of the height function 
is defined by comparing a dimer cover $m$ of $R$ with the fixed dimer cover $m_0$ of $\G$ which uses all dimers
on $\G$ which are oriented NW/SE with black NW vertex. 
The union $m\cup m_0$ consists in doubled edges and oriented chains of dimers, which are oriented so that dimers from $m_0$ are oriented from black to white. 
The height, defined on the faces of $R$, is constant on complementary components of the chains
and increases by one when crossing a chain oriented from left to right. See \cite{Kenyon.lectures}.} on the boundary of $R$ is defined on faces along the boundary, is constant on edges of types $1$ and $4$, and increasing rightwards on edges of
type $2$ and upwards on edges of type $3$, again as in Figure \ref{dominorgn}.
The fact that the height function is well-defined around the boundary is a necessary condition for existence of a dimer cover
(it implies that the graph is balanced, that is, has the same number of white vertices as black vertices). The  
condition of being balanced is a linear condition on the signed sidelengths: the total signed length of edges of type $2$ equals the total signed length of edges of type $3$ (where we define the length of these edges to be the number of vertices along them, not including vertices at concave corners).
 
Suppose $R_0$ is as above,
and let $R$ be the associated approximating region in $\eps\G$. 
We translate $R_0$ and $R$ so that $(0,0)$ is at a convex corner of $R_0$, with sides $1,2,\dots,4n$ in clockwise order starting from $(0,0)$.
We orient $R$ and $R_0$ so that side $1$ is vertical upwards, of type $1$, and side $4n$ is horizontal leftwards (of type $4$). 
The slopes of the height function of a dimer cover of $R$ near the vertices of $R$, ordered clockwise, 
run consecutively counterclockwise around the vertices of $N$, and are periodic with period $4$:
$$(0,0),(1,0),(1,1),(0,1),(0,0),\dots.$$

Letting $L_i$ be the signed length of side $i$ of $R_0$, the tangent planes to the limit shape in $R_0$ near the corners of $R_0$ are determined by these slopes and heights:
$$0, x, x+y-L_1,x+ L_2-L_1, L_2-L_3, \dots.$$

The rough phase region in the limit shape (the region where the gradient lies in the interior of $N$) is parameterized by $u\in\H$; the diffeomorphism from the rough phase region
to $u\in\H$ \emph{reverses orientation} (this is just conventional; we could instead
parameterize the rough phase region with the lower half-plane, and then orientation would be preserved).
Here $u$ is an $n$-fold cover of $z\in\H$, that is, $z=z(u)$ is a rational function of degree $n$. We 
let $a_1, a_2,\dots,a_{4n}\in\hat\R$ be the points $u$, in reverse order around $\hat\R$, at which the rough phase region 
is tangent to the (extended) lines corresponding to the sides of $R_0$.
These are points at which $z$ takes value $\{-1,0,1,\infty,-1,0,1,\infty,\dots\}$.
Thus $z$ necessarily has the form
$$z=B_1\frac{(u-a_2)(u-a_6)\dots(u-a_{4n-2})}{(u-a_4)(u-a_8)\dots(u-a_{4n})}$$
for a constant $B_1$.
Likewise we have
$$w=\frac{B_2(u-a_1)(u-a_5)\dots(u-a_{4n-3})}{(u-a_3)(u-a_7)\dots(u-a_{4n-1})}.$$
From $P(z,w)=0$ we have $w=\frac{z+1}{z-1}$ which can be used to determine $B_2, a_1,a_3,\dots,a_{4n-1}$ 
as functions of $B_1,a_2,a_4,\dots,a_{4n}$.

The equation 
\be\label{sutucu}s_ux+t_uy+c_u=0\ee
parameterizes the tangent lines to the arctic curve as $u$ runs over $\hat\R$, see \cite{KP2}.
The roots of $s_u$ therefore correspond to the horizontal sides of $R$, the roots of $t_u$ correspond to the vertical lines of $R$,
and the roots of $c_u$ are the tangent lines through the origin. 

There is one extra necessary condition on the $a_i$, which is that we need $c_u$ to vanish at the critical points of $z=z(u)$:
\begin{lemma}\label{critpoint}
The quantity $c_u$ vanishes at the critical points of $z(u)$. At higher order critical points $c_u$ vanishes with the corresponding higher multiplicity.
\end{lemma}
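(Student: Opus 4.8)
\noindent\textit{Proof proposal.}
The plan is to exploit the fact that, because $z$ and $w$ are tied together by the spectral curve, \emph{both} $s_u$ and $t_u$ vanish wherever $z'(u)=0$; the vanishing of $c_u$ will then be forced by the finiteness of the limit shape. The starting point is that for every $u\in\H$ the point $(x(u),y(u))$ of the limit shape attached to $u$ satisfies $s_u\,x(u)+t_u\,y(u)+c_u=0$: this is \eqref{sutucu}, which on $\hat\R$ expresses that the tangent line to the arctic curve passes through its own point of tangency, and which throughout $\H$ is simply the Legendre relation $dc=-x\,ds-y\,dt$ written in the $u$ coordinate. It therefore suffices to show that $s_u$ and $t_u$ vanish at the critical points of $z$, and to track the orders of vanishing.

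First I would compute $s_u$ and $t_u$ explicitly. By \eqref{stdef} we have $s=1-\tfrac1\pi\arg z=\Re(1+\tfrac{i}{\pi}\log z)$ and $t=1+\tfrac1\pi\arg w=\Re(1-\tfrac{i}{\pi}\log w)$, so differentiating the underlying holomorphic functions gives $s_u=\tfrac{i}{\pi}\,z'/z$ and $t_u=-\tfrac{i}{\pi}\,w'/w$, up to an overall constant that merely rescales \eqref{sutucu}. The crucial input is the spectral curve: from $w=\frac{z+1}{z-1}$ one computes $w'/w=-2z'/(z^2-1)$, hence $t_u=\tfrac{2i}{\pi}\,z'/(z^2-1)$. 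Thus both $s_u$ and $t_u$ are holomorphic multiples of $z'$, the factors $\tfrac{i}{\pi z}$ and $\tfrac{2i}{\pi(z^2-1)}$ being finite and nonzero away from $z\in\{0,\pm1,\infty\}$. Since $z$ maps $\H$ into $\H$ and attains the values $\{-1,0,1,\infty\}\subset\hat\R$ only at the $a_i$ (where generically $z'\neq0$), a critical point $u_0$ of $z$ avoids these special values, and we conclude $s_u(u_0)=t_u(u_0)=0$.

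Evaluating $s_u\,x(u)+t_u\,y(u)+c_u=0$ at $u=u_0$ and using that the point of tangency $(x(u_0),y(u_0))$ is a finite point of the bounded limit-shape region then gives $c_u(u_0)=0$. For the multiplicity, write $s_u=\alpha z'$ and $t_u=\beta z'$ with $\alpha,\beta$ holomorphic and nonzero at $u_0$; then $c_u=-z'(\alpha x+\beta y)$, so an order-$m$ zero of $z'$ produces an order-$m$ zero of $c_u$, provided the bounded factor $\alpha x+\beta y$ does not itself vanish at $u_0$. I expect the main obstacle to be exactly this quantitative last step: one must justify that the point of tangency stays finite as $u\to u_0$ (equivalently, that $c_u(u_0)=0$ is precisely the condition preventing the envelope from running off to infinity there), and then upgrade the real-analytic estimate $|c_u|\lesssim|u-u_0|^m$ to a genuine holomorphic zero of order $m$, for instance by reflecting $c_u$ across $\hat\R$ and applying a Cauchy estimate, all while excluding the non-generic coincidences $z(u_0)\in\{0,\pm1,\infty\}$ and the accidental vanishing of $\alpha x+\beta y$.
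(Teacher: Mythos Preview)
Your approach is essentially the paper's: both arguments observe that $s_u$ and $t_u$ factor through $z'$ (the paper via the bare chain rule $s_u=\tfrac{ds}{dz}\tfrac{dz}{du}$, you via the explicit formulas $s_u\propto z'/z$, $t_u\propto z'/(z^2-1)$), and then invoke finiteness of the envelope point $(x,y)$ to force $c_u=0$ from $s_u x+t_u y+c_u=0$.

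Your closing worries are largely unnecessary. The ``upgrade'' you flag is automatic: $c$ is harmonic, so $c_u$ is holomorphic in $u$; the bound $|c_u|\le |s_u||x|+|t_u||y|\le C|z'(u)|\le C'|u-u_0|^m$ on a punctured neighborhood of $u_0$ already forces a holomorphic zero of order at least $m$---no reflection or Cauchy estimate is needed, just the Taylor expansion of $c_u$. The exclusion $z(u_0)\notin\{0,\pm1,\infty\}$ is also free for critical points in $\H$, since $z$ maps $\H$ into $\H$ and those four values are real. Finally, you need not worry about the factor $\alpha x+\beta y$: the lemma asserts $c_u$ vanishes to \emph{at least} the order of $z'$, which is what the construction requires; accidental extra vanishing is harmless. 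The paper's own proof compresses all of this into ``Likewise for higher multiplicity.''
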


\begin{proof} The field of tangent planes to the limit shape is the family of planes $\{x_3-sx-ty-c=0~|~u\in\H\}$ where $x_3$
denotes the third coordinate, and $s=s(z), t=t(z), c=c(u)$ with $z=z(u)$ rational as above. The limit shape surface is defined
as the envelope of these planes, that is, as the solution to the simultaneous system
\begin{align*}
x_3&=sx+ty+c\\
0&=s_u x+t_u y+c_u.
\end{align*}
The second equation here is complex and thus gives two real (and linear) relations on $x,y$. 
At a critical point of $z$, $s_u=\frac{ds}{dz} \frac{dz}{du}= 0$ and likewise $t_u=\frac{dt}{dz} \frac{dz}{du}= 0$. So there is a finite value of $(x,y)$ only if $c_u=0$ at this point as well. Likewise for higher multiplicity.
\end{proof}


What remains is to find the parameters $a_i$ so that $c$ satisfies the condition of Lemma \ref{critpoint} and the values of $s_u,t_u,c_u$ at $a_1,a_2,\dots,a_{4n}$ in (\ref{sutucu})
match the lines of the corresponding sides of $R_0$. To carry this out in practice it seems unavoidable to have to solve an algebraic system of equations; however existence and uniqueness of the limit shape imply that
there is a unique solution for which the $a_i$ are cyclically ordered clockwise on $\hat\R$.
We illustrate the method with the two easiest cases of $n=1$ and $n=2$ in the next sections. 

\subsection{Aztec diamond}

The simplest case is when $n=1$ and thus $R_0$ is a square; this is the case of the Aztec diamond, 
first defined in \cite{EKLP1}. See Figure \ref{AD2d}. In this case we can take $z=u$, since the cover is of degree $1$. 
The $s,t,c$ functions as functions of $z$ for $z\in\R$ are (in blue in the figure): 
\begin{center}\begin{tabular}{c | c | c | c | c}
&$-1<z<0$&$0<z<1$&$1<z<\infty$&$-\infty<z<-1$\\\hline
&$0>w>-1$&$-1>w>-\infty$&$\infty>w>1$&$1>w>0$\\\hline
$s$&$0$&$1$&$1$&$0$\\
$t$&$0$&$0$&$1$&$1$\\
$c$&$0$&$0$&$-1$&$0$
\end{tabular}\end{center}
\begin{figure}
\begin{center}\includegraphics[width=2in]{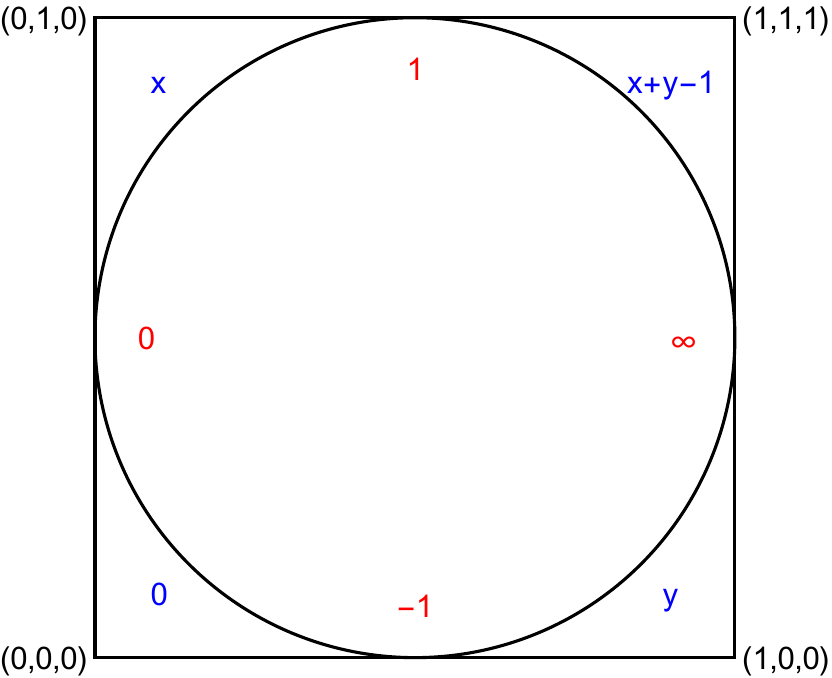}\end{center}
\caption{\label{AD2d}Aztec diamond with $z$ values in red and boundary tangent planes in blue (the blue quantities are the linear equations for the facets: for example the upper right facet lies in the plane $x_3=x+y-1$). The quantities $s,t,c$ for each facet
are the coefficient of $x$, the coefficient of $y$, and the constant term, respectively.  Vertices of the outer polygon are labeled
with their coordinates in $\R^3$ (the third coordinate being the height function).} 
\end{figure} 

Since $c$ is harmonic in $z\in\H$ it is defined from its boundary values 
$$c = \frac{-\pi+\Arg(z-1)}{\pi}.$$
The $s,t$ values are also harmonic; notice that they satisfy (\ref{stdef}).

In this case there is no condition from Lemma \ref{critpoint}. 
The surface can be found as the envelope of the planes
$x_3=sx+ty+c$ as $z$ varies over $\H$. We can find $x,y,h=x_3$ as a function of $z$ by
solving the simultaneous linear equations (with coefficients which are functions of $z$) for $h,x,y$:
\begin{align}
sx+ty+c&=h\notag\\
s_zx+t_zy+c_z&=0\label{3deq}\\
s_{\bar z}x+t_{\bar z}y+c_{\bar z}&=0\notag
\end{align}
Here the third equation is redundant as it follows from the second one, however it is useful to record for computational purposes.
Solving equations (\ref{3deq}) we find
$$(x,y)=\left(\frac{|z|^2}{1+|z|^2},\frac{|z+1|^2}{2(1+|z|^2)}\right),$$
and 
$$h=-1+\frac{\Arg(z-1)}\pi + \frac{|z|^2(\pi-\Arg(z))}{\pi(1+|z|^2)}+\frac{|z+1|^2(\pi+\Arg w)}{2\pi(1+|z|^2)}.$$
See Figure \ref{AD3d}. This Aztec diamond example was treated with the same method in Section 6.1 of \cite{KP1}, but with a different convention for the Newton polygon $N$.
\begin{figure}
\begin{center}\includegraphics[width=3in]{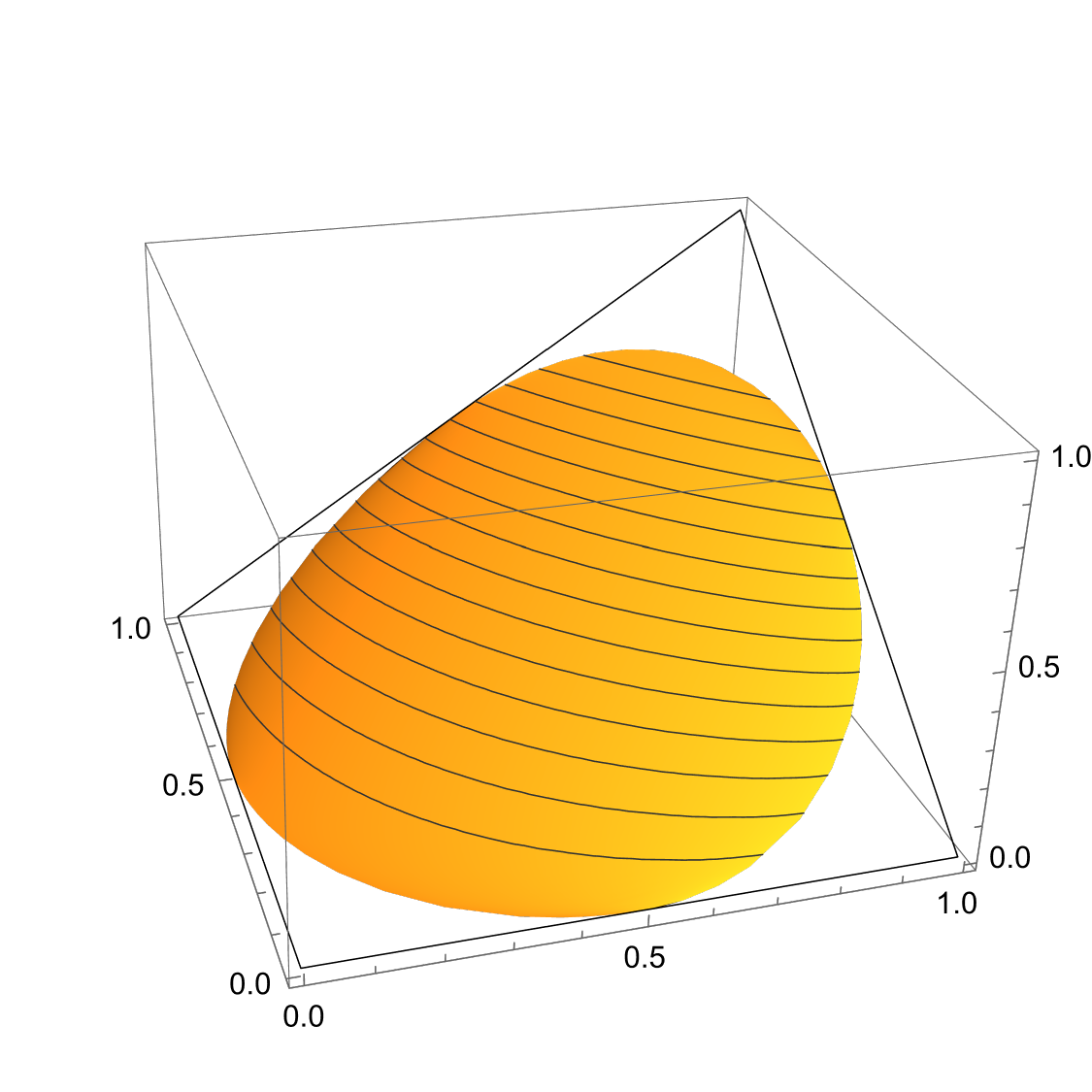}\end{center}
\caption{\label{AD3d} Aztec diamond limit shape surface with contour lines for the height function (the vertical axis is the height function). (Shown is the rough phase 
region; the limit shape is 
linear on each component outside the rough phase region.)}
\end{figure} 

\subsection{Symmetric octagon}

Suppose $R_0$ is the octagon of Figure \ref{dominorgn2} which has a reflection symmetry along the line $x=y$.
(This symmetry is not essential for the theory but makes the calculations easier.)
\begin{figure}
\begin{center}\includegraphics[width=3in]{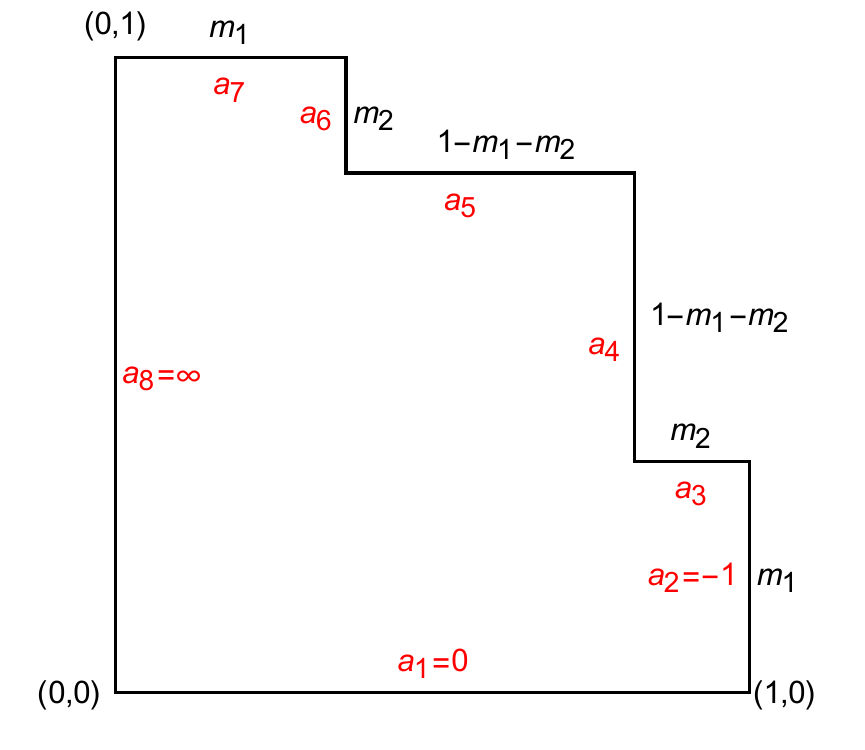}\end{center}
\caption{\label{dominorgn2} An octagon with diagonal symmetry, with $u$ values at tangency points marked in red.}
\end{figure} 

We have $8$ parameters $a_1,\dots,a_8$.
We have the freedom of a M\"obius transformation to set $a_1=0,a_2=-1,a_8=\infty$.
By symmetry of $R_0$, we then necessarily have $a_i=\rho/a_{9-i}$ for some $\rho$ to be determined as a function of the edge lengths $m_1,m_2$.
Set $z=B_1\frac{(u-a_4)}{(u+1)(u-a_6)}$.
Then the fact that $z=-1$ at $u=0$ determines $B_1$ to be $B_1=a_6/a_4$. Likewise $z=-1$ at $u=a_5=\rho/a_4$ determines $\rho$ as a function of $a_4,a_6$:
$\rho=a_4a_6-a_4+a_6$. 

The functions $s,t,c$ are determined in terms of the remaining two variables $a_4,a_6$ by the following table of their boundary values.
\begin{center}\begin{tabular}{c | c | c | c | c|c|c|c|c}
$u$&$(-\infty,a_7)$&$(a_7,a_6)$&$(a_6,a_5)$&$(a_5,a_4)$&$(a_4,a_3)$&$(a_3,a_2)$&$(a_2,a_1)$&$(a_1,\infty)$\\\hline
$z$&$(0,1)$&$(1,\infty)$&$(\infty,-1)$&$(-1,0)$&$(0,1)$&$(1,\infty)$&$(\infty,-1)$&$(-1,0)$\\\hline
$s$&$1$&$1$&$0$&$0$&$1$&$1$&$0$&$0$\\
$t$&$0$&$1$&$1$&$0$&$0$&$1$&$1$&$0$\\
$c$&$0$&$-1$&$m_1-1$&$m_1-m_2$&$m_1-1$&$-1$&$0$&$0$
\end{tabular}\end{center}

For example 
{\footnotesize $$c=\frac1{\pi}\left(-\arg(\frac{u-a_6}{u-a_7}) +(m_1-1)\arg(\frac{u-a_5}{u-a_6})+(m_1-m_2)\arg(\frac{u-a_4}{u-a_5})+
(m_1-1)\arg(\frac{u-a_3}{u-a_4})-\arg(\frac{u-a_2}{u-a_3})\right).$$}
A short calculation shows 
that $c_u$ vanishes at the critical points of $z$ if
$$a_6=\frac{2-m_1-m_2}{m_2-m_1}.$$
Then the fact that $z=1$ at $a_7=\rho/a_2$ determines $a_4$:
$$a_4=-\frac{(\sqrt{2}+1)(2-m_1-m_2)}{2(1-m_2)}.$$

We thus have $(a_1,\dots,a_8) =$
{\footnotesize \be\label{as}=\left(0,-1,-\sqrt{2},-\frac{(1+\sqrt{2})(2-m_1-m_2)}{2(1-m_2)},-\frac{(4-2\sqrt{2})(1-m_2)}{m_1-m_2},
-\frac{2-m_1-m_2}{m_1-m_2},-\frac{\sqrt{2}(2-m_1-m_2)}{m_1-m_2},\infty\right).\ee}
This leads to the limit shape of Figure \ref{octagonlimitshape}.

\begin{figure}
\begin{center}\includegraphics[width=1.7in]{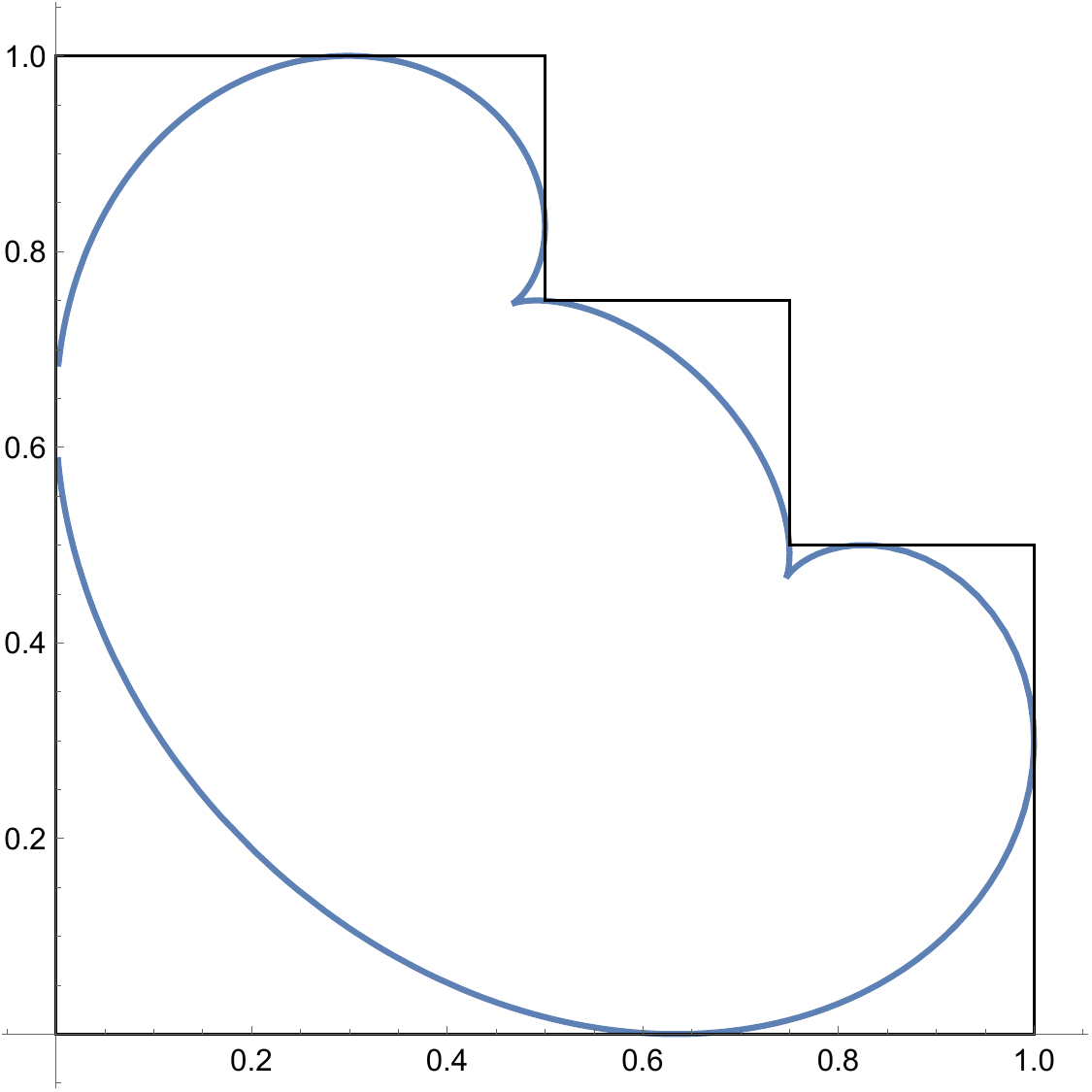}\hskip1cm\includegraphics[width=2in]{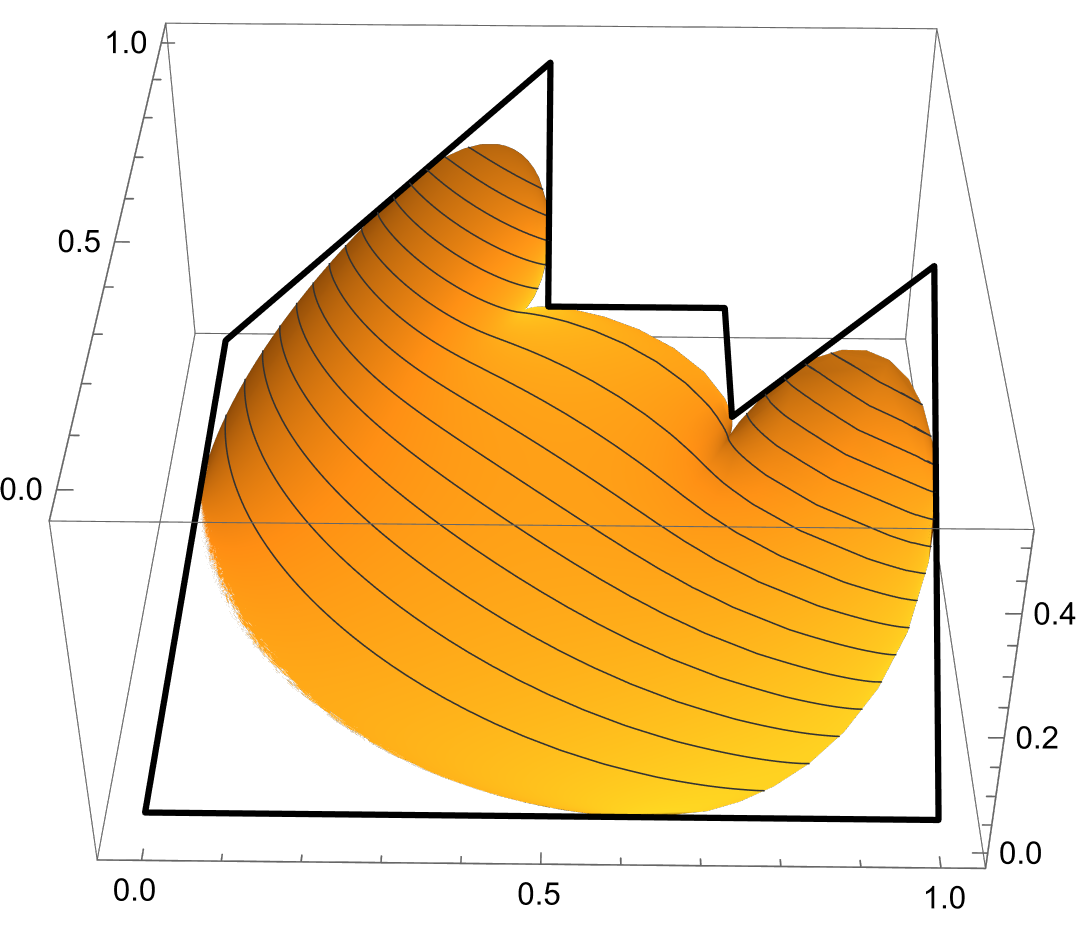}\end{center}
\caption{\label{octagonlimitshape}Domino arctic curve and rough phase region limit shape when $m_1=\frac12,m_2=\frac14$.}
\end{figure} 

There is a nontrivial inequality on $m_1,m_2$ in order for $R_0$ to be feasible (that is, in order for there to exist a tiling): 
this feasibility is equivalent to $(a_1,\dots,a_8)$ of (\ref{as}) being in the correct (cyclic) order on $\hat\R$. We need
$$m_1\ge m_2\ge (3+2\sqrt{2})m_1-2(1+\sqrt{2}).$$
See Figure \ref{octagonothers} for arctic curves near the two edges of the parameter space.
\begin{figure}
\begin{center}\includegraphics[width=2in]{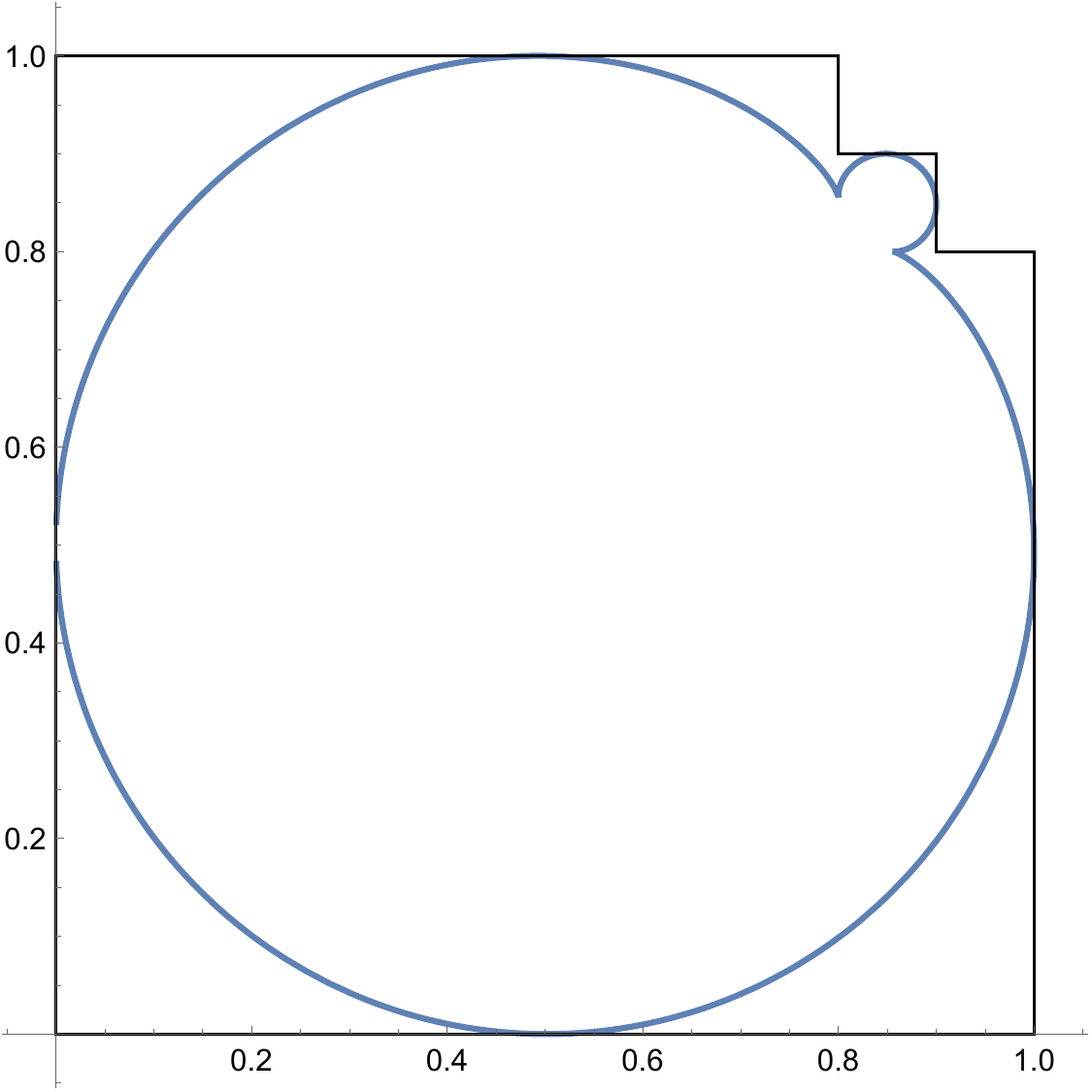}\hskip1cm\includegraphics[width=2in]{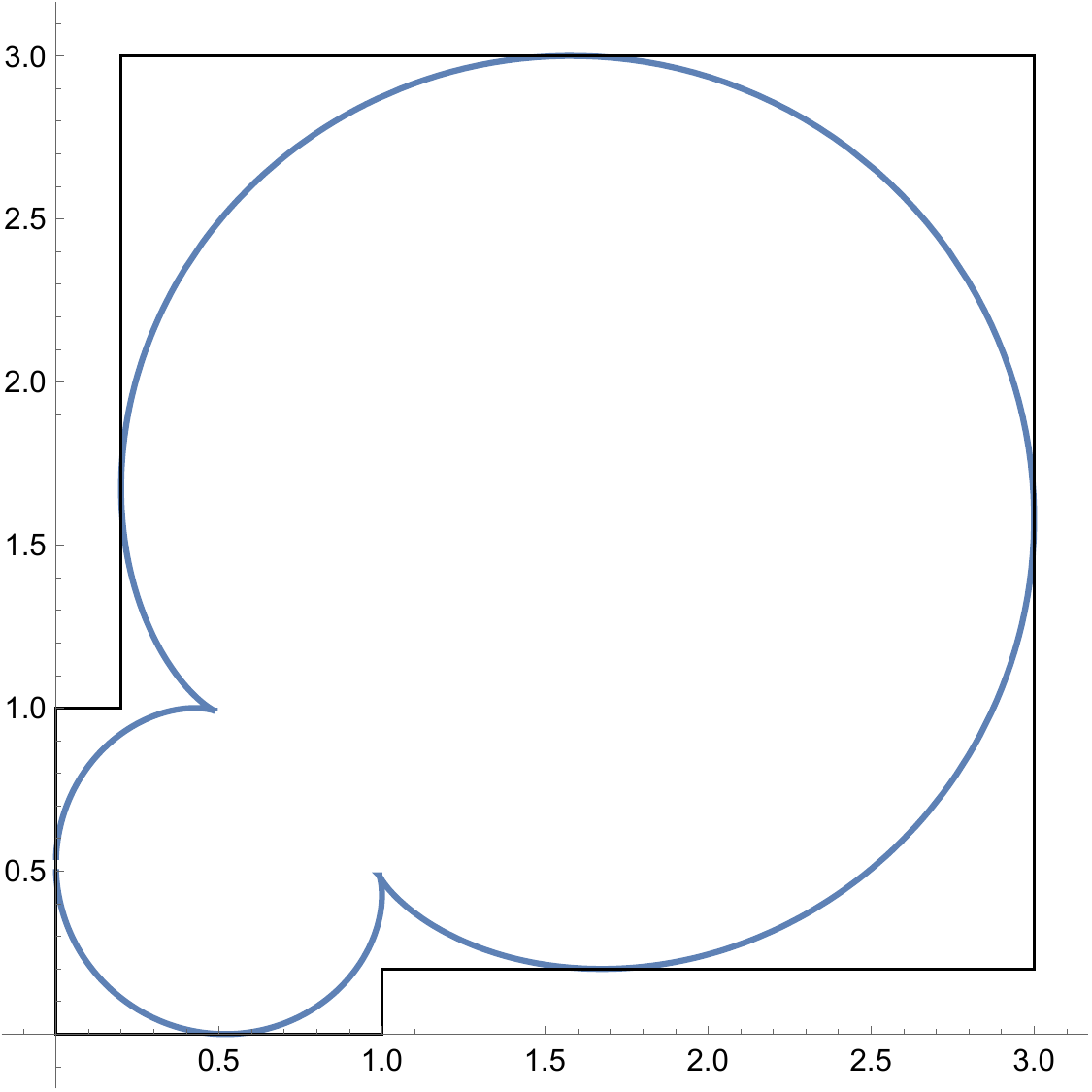}\end{center}
\caption{\label{octagonothers}Domino arctic curves when $m_1=\frac45,m_2=\frac1{10}$ and when $m_1=\frac15,m_2=-2$.}
\end{figure} 

\subsection{Aztec fortress}
\label{se:aztecfortress}

We consider the square-octagon lattice dimer model with weights $1,a$ on edges as shown in Figure \ref{period2domino}.  This dimer model is equivalent to the square lattice dimer model
with $2$-periodic weights, as shown in Figure \ref{period2domino}. 
\begin{figure}[h]
\begin{center}\includegraphics[width=2in]{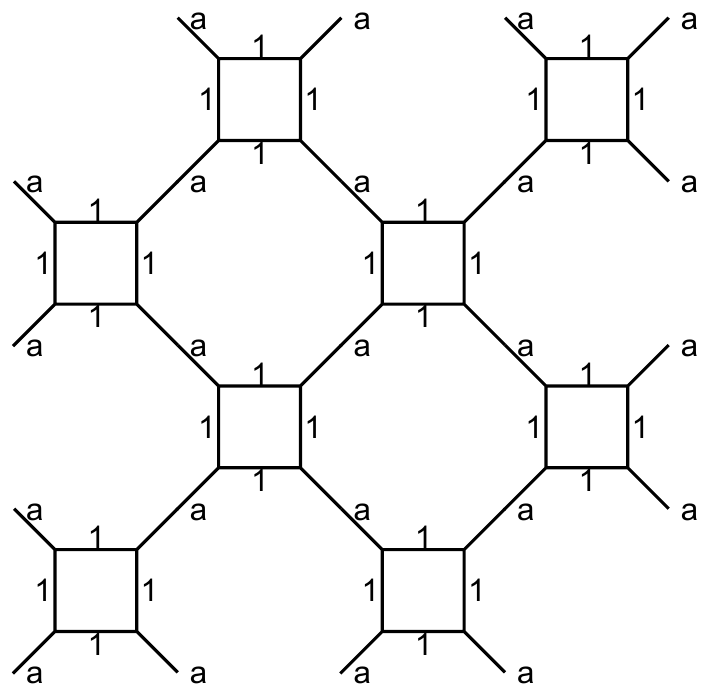}\hskip1cm\includegraphics[width=2in]{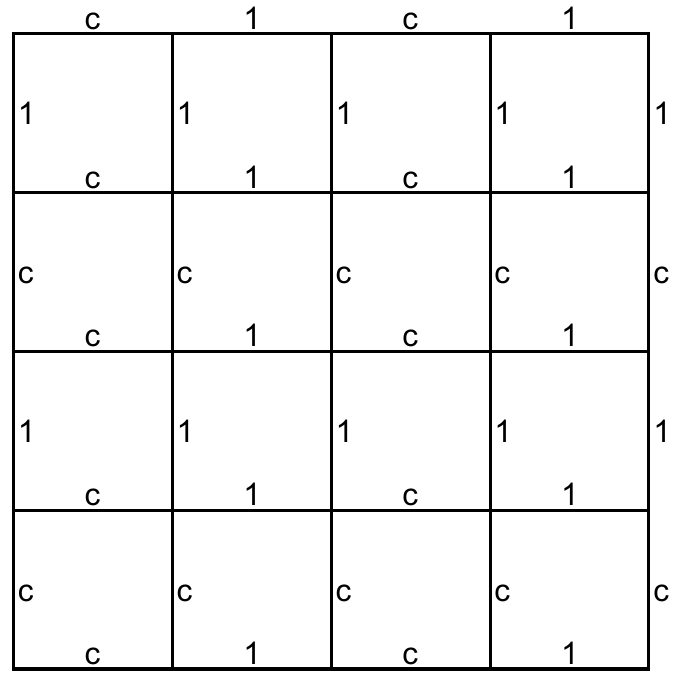}\end{center}
\caption{\label{period2domino} Square-octagon lattice and two-periodic square lattice. There is a weight-preserving correspondence between dimers on these two lattices if $2c=a^2$, see \cite{Propp}.}
\end{figure} 

The Newton polygon for either of these lattices is 
$$N=cvx\{(1,0),(0,1),(-1,0),(0,-1)\}.$$ The surface tension $\sigma$ is strictly convex in the interior of $N$, and analytic in $N$ except for a conical singularity at $(0,0)$ (when $c\ne1$). Here $z$, the conformal coordinate, is parameterized by an annulus which we choose to be $\A=[0,2]\times[0,\tau]/\!\!\sim$ where $\sim$ identifies the right and left boundaries by translation,
with $\tau=\tau(c)$.

The ``Aztec fortress" boundary conditions \cite{Propp} for the square octagon lattice are equivalent to the Aztec diamond boundary conditions for the square grid dimer model with the above $2$-periodic weights.
For these boundary conditions we can take $z=u$: the rough phase region maps to the annulus with degree $1$.  
The lower boundary of the annulus $\A$ corresponds to the outer arctic boundary, and the upper boundary of 
$\A$ corresponds to the inner (``smooth/rough") boundary.

Using the $4$-fold symmetry, the function $s$ has values $1,0,-1,0$ on the lower boundary of $\A$ for $z$ respectively in the intervals $[0,1/2],[1/2,1],[1,3/2],[3/2,2]$, and value $0$ in the upper boundary. The harmonic extension of these
boundary values allows $s$ to be written
explicitly in terms of elliptic functions.  It is convenient to use the Weierstrass $\sigma$-function (defined in e.g. \cite{Ahlfors}, not to be confused with the surface tension).
With $\tau=1$ we have
$$s(z)=1-\frac{\Im(z)}{2}-\frac1\pi\arg\frac{\sigma(z)\sigma(z+\frac12)}{\sigma(z+1)\sigma(z+\frac32)},$$
and likewise 
$$t(z)=\frac1\pi\arg\frac{\sigma(z-\frac12)\sigma(z-2)}{\sigma(z-1)\sigma(z-\frac32)},$$
$$c(z)=1-\frac{\Im(z)}{2}+\frac2\pi\arg\frac{\sigma(z)\sigma(z+\frac12)}{\sigma(z+1)\sigma(z+\frac32)}.$$
These expressions can be checked by differentiation, using $\frac{d}{dz}\log\sigma(z) = \zeta(z)$, 
where $\zeta$ is the Weierstrass zeta function, the analog of ``$1/z$" on the associated torus.
The limit shape for $\tau=1$ is shown in Figure \ref{ADfacet}.

\begin{figure}[h]
\begin{center}\includegraphics[width=2.5in]{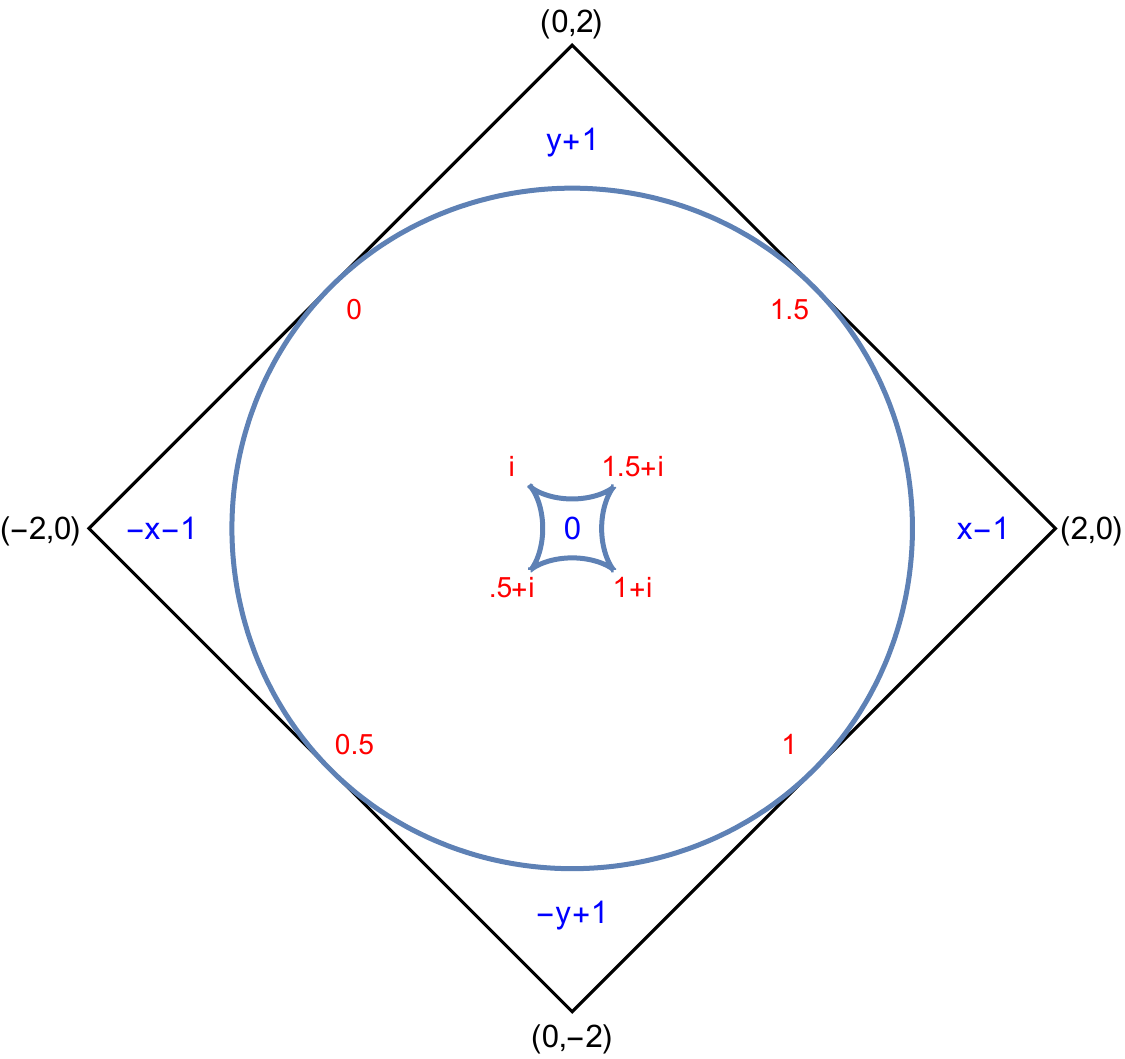}\hskip1cm\includegraphics[width=3in]{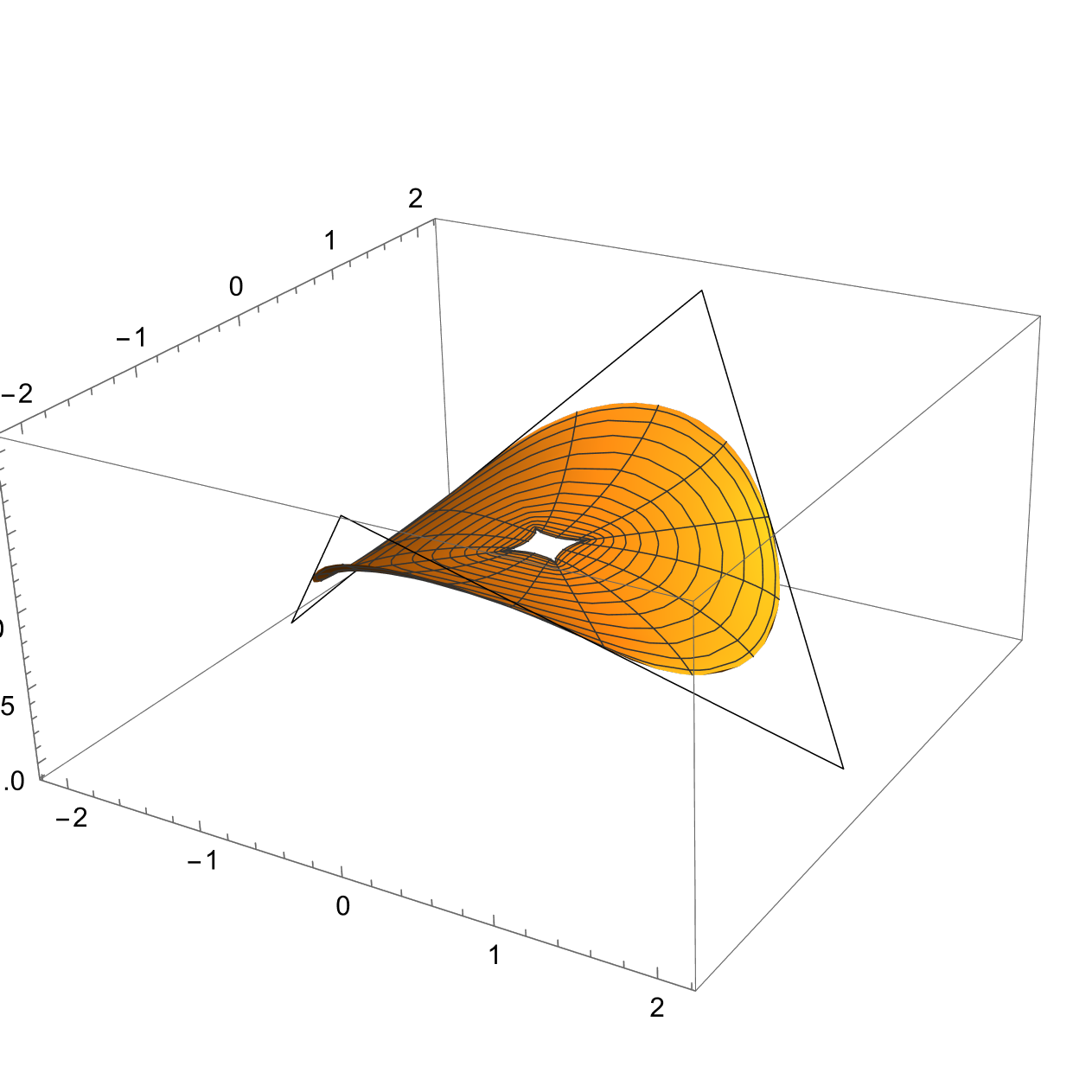}\end{center}
\caption{\label{ADfacet} For the square-octagon dimer model on a ``fortress" region, the arctic curve with facet equations (left) and 3d rough phase region limit shape (right).}
\end{figure}

\section{Five vertex model}

The five vertex model \cite{dGKW, KP2} is a model of non-intersecting north- and west-going lattice paths in
$\Z^2$. The probability of a configuration (on a finite domain) is proportional to the product of
its vertex weights as given in Figure \ref{5vvertexwts}.
\begin{figure}[htbp]
\begin{center}\includegraphics[width=3.5in]{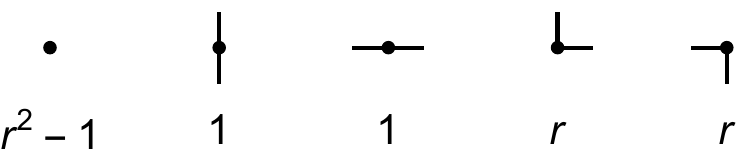}\end{center}
\caption{\label{5vvertexwts}Vertex weights for the $5$-vertex model.}
\end{figure} 
Here $1<r<\infty$ is a parameter\footnote{One can also define and solve the model for $0<r<1$, setting the weight of the empty vertex to be $1-r^2$, see \cite{dGKW, KP2}. We will not consider this case here.}.
 
The features of the $5$-vertex model which are relevant for the current discussion are as follows.
\begin{enumerate}
\item Let $N=cvx\{(0,0),(1,0),(0,1)\}$; the surface tension $\sigma=\sigma(s,t)$ is strictly convex and analytic on
the interior of $N$ and $+\infty$ outside of $N$.
\item The intrinsic coordinate $z$ can be chosen to be parameterized by $z\in\bar\H$. There is another intrinsic coordinate
$w$, where $w\in\H$, related to $z$ through the spectral curve $P(z,w)=1-z-w+(1-r^2)zw=0.$

\item For $(s,t)\in N$ we have $t(z)=\frac{\pi+\Arg z}{\theta}$ and $s(z)=t(\bar w)= \frac{\pi-\Arg w}{\theta}$
where $\theta(z)=2\pi+\Arg\frac{z}{1-z}=2\pi-\Arg\frac{w}{1-w}$. See Figure \ref{5vst}.
\end{enumerate} 

\begin{figure}
\begin{center}\includegraphics[width=5in]{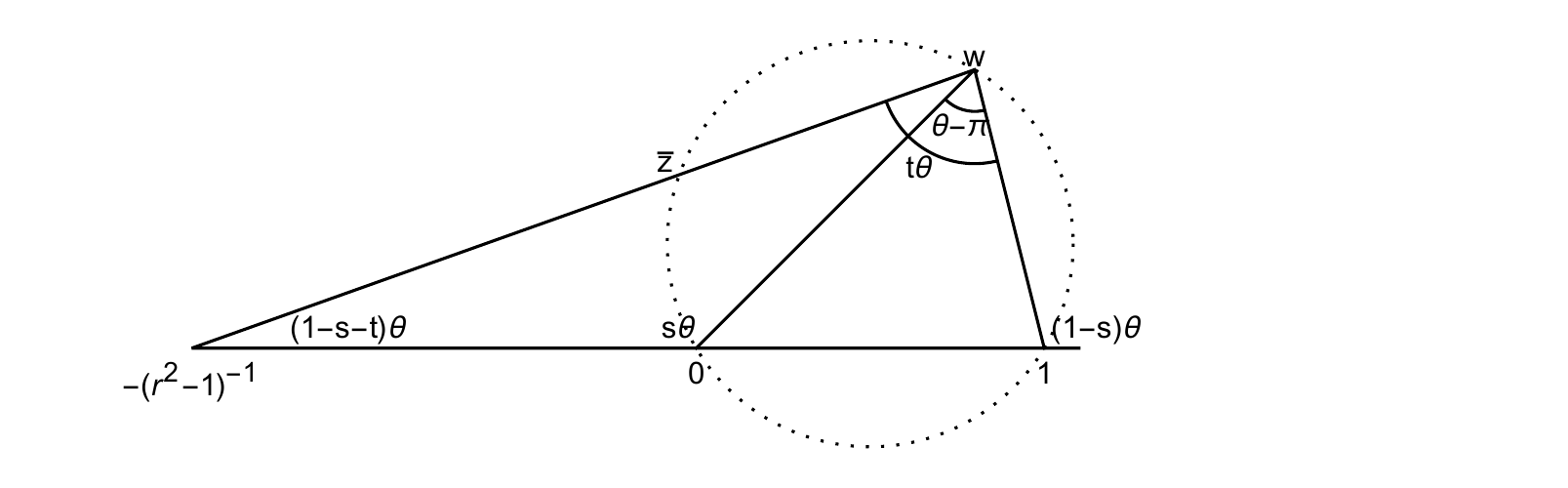}\end{center}
\caption{\label{5vst}Relationship between $r,s,t,w,z,\theta$ in the $5$-vertex model. Given $r,w$
we can draw a circle passing through $0,1,w$. Then $z,\theta,s,t$ are determined as shown.}
\end{figure} 
The five vertex model limit shape is the envelope of the family of planes parameterized by $u\in\H$ (we use $x_3$ for the third coordinate)
\be\label{5vlines1}
x_3=sx+ty + c.
\ee
The surface itself is the envelope of these planes; it is obtained by solving simultaneously both (\ref{5vlines1}) and its derivative which is (see \cite[Corollary 4.3]{KP2})
\be\label{5vlines}
x_3=\frac{(\theta s)_u}{\theta_u} x+\frac{(\theta t)_u}{\theta_u} y + \frac{(\theta c)_u}{\theta_u}\ee
where $z=z(u)$ is an appropriate holomorphic map. 
Since $\theta, \theta s,\theta t$ and $\theta c$ are harmonic, they can be determined
from their boundary values, that is, for $u\in\R$: see a worked example in the next section.

\subsection{5-vertex boxed plane partition}

Here we determine the limit shape for the 5-vertex model in a hexagon. See Figure \ref{hexagon}. This limit shape was
first determined in \cite{dGKW} by a different method; the method presented here leads to a more explicit form of the limit shape and is more amenable to generalizations.
We parametrize the rough phase region by $u \in \bar\H$ (via an orientation-preserving diffeomorphism in this case), and $z(u)\in\bar\H$ is a degree-2 cover, to be determined. 
There are $8$ tangency points to $R_0$ of the arctic curve, where $u$ takes real values $a_1, \dots, a_8$.
We set $a_8=\infty, a_1=0$, and order them as $a_7\le a_6\le\dots\le a_1=0$.  
\begin{figure}
\begin{center}\includegraphics[width=2.5in]{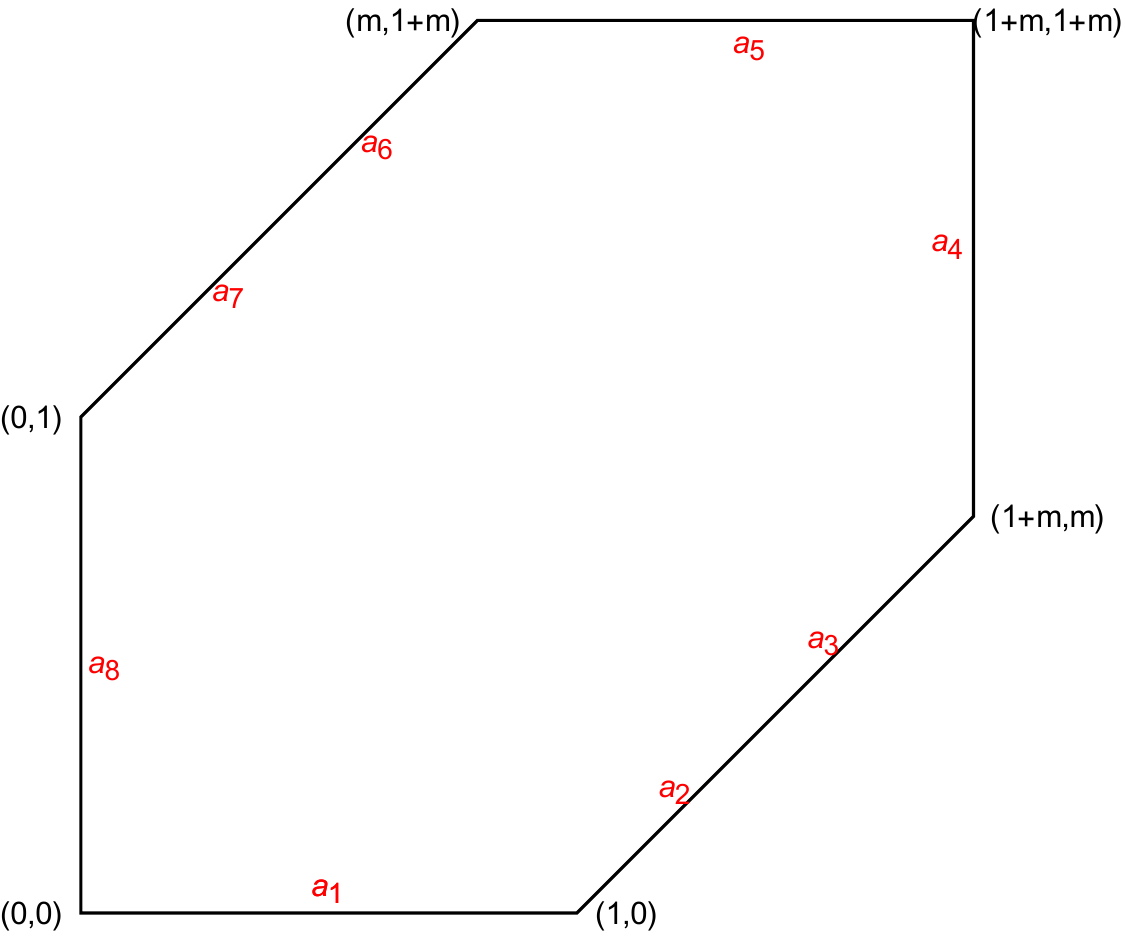}\end{center}
\caption{\label{hexagon}}
\end{figure} 
We determine the intercept function $c=c(u)$ as follows.
Recall that $c(u)$ is a ratio of two harmonic functions, $c(u)=\frac{G(u)}{\theta(u)}$.
The boundary values of $c$ and $\theta$ are determined from the following table, which allows
us to find the boundary values of $G$. The harmonicity of $G$ allows us to define $G(u)$ in all of $\H$,
from which we get $c(u)$. 

\begin{center}\begin{tabular}{c | c | c | c | c|c|c|c|c}
$u$&$(-\infty,a_7)$&$(a_7,a_6)$&$(a_6,a_5)$&$(a_5,a_4)$&$(a_4,a_3)$&$(a_3,a_2)$&$(a_2,a_1)$&$(a_1,\infty)$\\\hline
$z$&$(-1,0)$&$(0,1)$&$(1,\infty)$&$(-\infty,-1)$&$(-1,0)$&$(0,1)$&$(1,\infty)$&$(-\infty,-1)$\\\hline
$s$&$1$&$1/2$&$0$&$0$&$1$&$1/2$&$0$&$0$\\
$t$&$0$&$1/2$&$1$&$0$&$0$&$1/2$&$1$&$0$\\
$c$&$0$&$-1/2$&$-1$&$m$&$-1$&$-1/2$&$0$&$0$\\
$\theta$&$\pi$&$2\pi$&$\pi$&$\pi$&$\pi$&$2\pi$&$\pi$&$\pi$\\
$\frac{G}{\pi}$&$0$&$-1$&$-1$&$m$&$-1$&$-1$&$0$&$0$
\end{tabular}\end{center}

This gives 
$$G = -\arg(\frac{u-a_7}{u-a_6})-\arg(\frac{u-a_6}{u-a_5})+m\arg(\frac{u-a_5}{u-a_4})-\arg(\frac{u-a_4}{u-a_3})-\arg(\frac{u-a_3}{u-a_2}).$$

Using symmetry along the major diagonal we set $a_5=1/a_4,a_6=1/a_3,a_7=1/a_2$. Symmetry along the minor diagonal
then relates the $a$s under the map $u\mapsto\frac{u-a_4}{a_4u-1}$. This gives $a_3=\frac{a_2-a_4}{a_2a_4-1}$,
and there are two remaining parameters $a_2,a_4$. 

We have $z=B\frac{(u-a_3)(u-a_7)}{u(u-a_5)}$ and $B=-1$ by setting $u=\infty$.
Also $1=z(a_2)$ sets one of the remaining parameters. The last parameter is determined by $G'(u)=0$ when $z'(u)=0$. 
This leads to 
$$(a_1,\dots,a_4) = \left(0,-\frac{\sqrt{m}}{2^{1/4}\sqrt{m+2}},-\frac{2^{1/4}\sqrt{m}}{\sqrt{m+2}},-\frac{2+\sqrt{2}}{2^{7/4}}\frac{\sqrt{m(m+2)}}{m+1}\right)$$
and $(a_8,a_7,a_6,a_5)$ are inverses of these. This leads to the limit shape of 
Figure \ref{bpp53d}.

\begin{figure}
\begin{center}\includegraphics[width=2.5in]{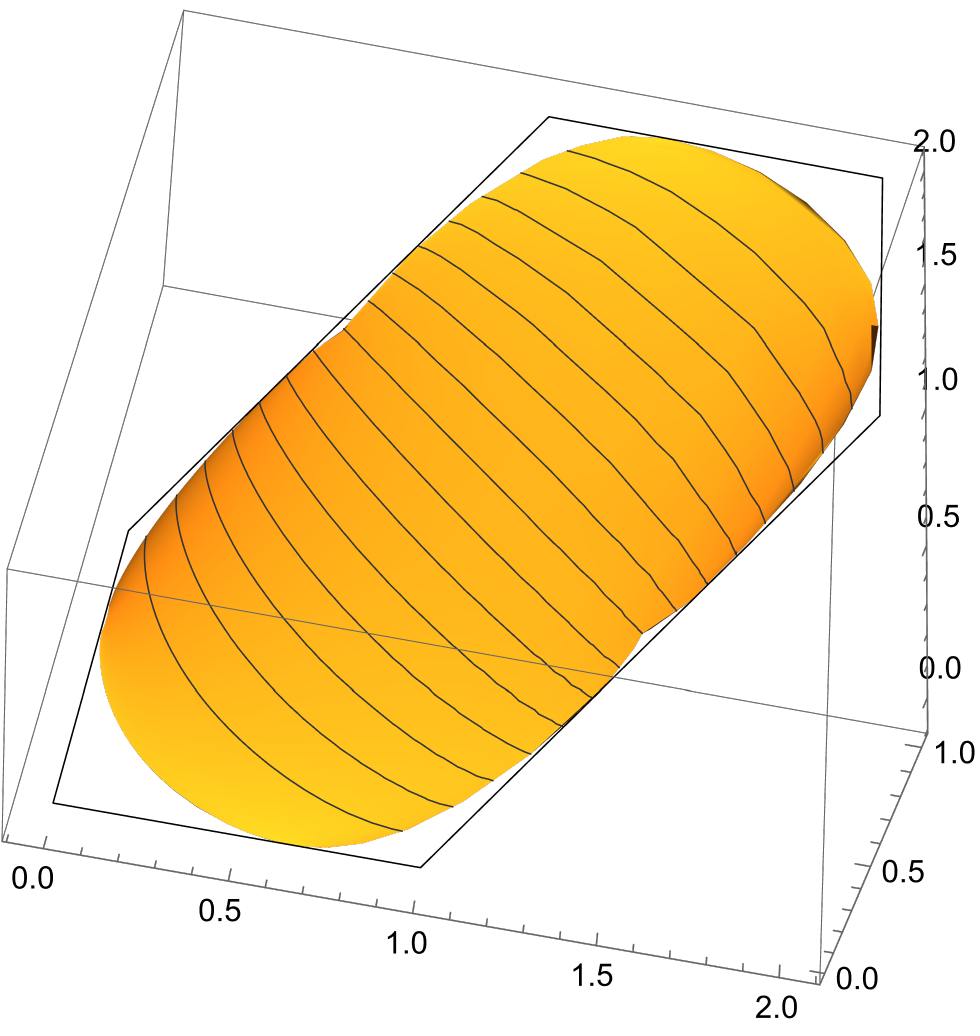}\end{center}
\caption{\label{bpp53d}The 5-vertex model with ``boxed plane partition" boundary conditions, showing height contours,
with $r=\sqrt{2}, m=1$. Observe the small facets in the middle of the diagonal edges (pictured in white).}
\end{figure}

\old{
\section{From $5$-vertex to dominos}
There is a connection between square grid dimer arctic curves and arctic curves for the $5$-vertex model. 

Given a curve $\Gamma:u\mapsto (\alpha(u),\beta(u),\gamma(u))$ in $\R^3$, we can consider the family of lines
$$L_u=\{(x,y)~|~\alpha(u)x+\beta(u)y+\gamma(u)=0\}$ in $\R^2.$$ 
This family only depends on the projectivization $\hat\Gamma$ 
of $\Gamma$.
The envelope of this family of lines is another curve,
the \emph{projective dual} curve to $\hat\Gamma$. 

Consider the curve 
\be\label{Qcurve} Q=(\frac{(\theta s)_u}{\theta_u},\frac{(\theta t)_u}{\theta_u}, \frac{(\theta c)_u}{\theta_u})\ee
in $\R^3$.
On the facet where $h=0$, the equation (\ref{5vlines}) is 
\be\label{family1}0=\frac{(\theta s)_u}{\theta_u} x+\frac{(\theta t)_u}{\theta_u} y + \frac{(\theta c)_u}{\theta_u}.\ee
The envelope of these lines (for $u\in(-\infty,0)$) is the arctic curve in this facet; it is the projective dual of $\hat Q$.
Recall that $\theta s = \pi-\Arg w,~~\theta t =\pi+\Arg z$ and $\theta c=G$.

On the other facets $h$ is linear, taking values 
$h=x+c_1$ or $h=y+c_2$ or $h=\frac12(x+y)+c_3$ for constants $c_i$ depending on the facet. 
For example on the next (counterclockwise) facet we have $h=y$; 
the equation (\ref{5vlines}) is 
$$y=\frac{(\theta s)_u}{\theta_u} x+\frac{(\theta t)_u}{\theta_u} y + \frac{(\theta c)_u}{\theta_u},$$
or 
$$0=\frac{(\theta s)_u}{\theta_u} x+\left(\frac{(\theta t)_u}{\theta_u}-1\right)y + \frac{(\theta c)_u}{\theta_u}.$$
This is the dual of the (projectivization of the) translate $Q-(0,1,0)$ of $Q$.
Generally moving from one facet to another corresponds to translating $Q$ in $\R^3$: $Q$ is translated by $(0,-1,0),(-\frac12,-\frac12,\frac12),(-1,0,1),\dots$
on the subsequent facets.

The family of lines defined by $Q$ is identical, after switching $z$ and $w$, to the corresponding family for a certain domino limit shape, for the domino model with spectral curve $P$, in an octagon determined
from $G$. Suppose for example $r=\sqrt{2}$; then (after changing the sign of $z$ and $w$) the spectral curve for the $5$-vertex model is the same as the spectral curve for the uniform domino model.

\paragraph{Arctic curves.}
The arctic curve is represented as an envelope of lines
\[ s_u x+t_u y + c_u=0, \quad u \in \mathbb{R} \cup \{ \infty \}.
\]
Writing the functions as ratios of harmonic functions $s=\phi/\psi$, $t=\phi^*/\psi$, $c=G/\psi$, we have
\[ \phi_u x + \phi^*_u y + G_u -\psi_u (sx+ty+c)=\phi_u x + \phi^*_u y + G_u -\psi_u h.
\]
On each facets $h(x,y)=s_0 x+t_0 y + c_0$ is affine, so the curve is a different shearing of the single curve
\begin{equation}
\label{eq:cloudcurve} 
\phi_u x + \phi^*_u y + G_u=0.
\end{equation} 
The coefficients here are holomorphic (as complex derivatives of harmonic functions), moreover for this type of extremal boundary conditions (like BPP) they are rational functions in $u$. This means that on the facets the artic curve is a portion of an algebraic curve (sheared differently on each facets). On the neutral regions the curve is not algebraic, I suppose. Altogether, we have 8 pieces: 6 algebraic, 2 analytic and 8 points of non-analycity.

Moreover, solving \eqref{eq:cloudcurve} for $x,y$ as functions of $u \in \H$ seems to give a ``free-fermionic'' rough phase
 region, i.e. a cloud curve boundary as for lozenges. Namely, we have the complex Burgers-type equation $\frac{u_x}{u_y}=\gamma(u)$, where 
\[ \gamma(u)=\frac{\phi_u}{\phi^*_u}
\]
is a rational function.

What is the degree of the (dual) curve arising this way from 5-vertex BPP? Does it have cusps or is it simply an ellipse? Can we associate a dimer model (with sheared boundary conditions) to the 5-vertex BPP so that the arctic curve say around the $(0,0)$ corner becomes identical with the frozen boundary of the dimer limit shape. For this, it may be useful to consider the harmonically moving planes
\[x_3=\phi(u) x+\phi^*(u)y+G(u).
\]
Such correspondence between non-determinantal and determinantal arctic curves have been observed in \url{https://arxiv.org/abs/1910.06833}.

It seems that the arctic curves can be reduced to domino tilings this way. In this case, the (analytic continuation of the) arctic curve should have 2 cusps ($2d-2$ where $d$ is the degree of the cover).
For this reduction, it seems more convenient to work with a height function with a Newton polygon of a triangle $\{ (1,0),(1,1),(0,1) \}$. The conical point opens up to a new amoeba boundary corresponding to  corner $(0,0)$ in this reduction, the other corners stay fixed. The domino model is weighted depending on the parameter $r$, ($r=\sqrt{2}/2$ corresponds to unweighted dominos).

\paragraph{The $r>1$ case.}
Now $\phi(w)=\pi-\arg w$ and $\phi^*(w)=\phi(\bar z)=\pi+\arg z$.
The spectral curve for $r=\sqrt{2}$ is equivalent to (uniform) domino tilings. For general $r>1$ the spectral curve is equivalent to weighted domino tilings (with a horizontal-vs-vertical bias).
Thus after reduction to the ``numerators'' as in \eqref{eq:cloudcurve} we get something that looks like domino tilings, with the semifrozen state corresponding to the $(1,1)$-domino corner.
This suggests that the 5-vertex arctic curve is piecewise algebraic and moreover comes from a single domino frozen boundary sheared piecewise differently. (Have to check whether this makes sense.)
The question arises: how boundary conditions are transformed, e.g. what is the domino boundary condition corresponding to 5-vertex BPP? For this, one should look at how the boundary planes are transformed.

}

\section{Four-vertex model versus lozenge tilings}\label{4vtx}

In this section, we consider the four-vertex model which is a degenerate case of the five-vertex model, where the 
third type of vertex in Figure \ref{5vvertexwts} has weight $0$.
Its arctic curves for specific (analogous to boxed plane partition) boundary conditions were recently studied in \cite{burenev2023arctic}. In order to be consistent with \cite{burenev2023arctic} we also switch to north- and east-going lattice paths in this discussion, i.e. mirror the last two configurations with respect to the vertical axis in Figure \ref{5vvertexwts}.  We find that limit shapes of the four-vertex model, viewed as surfaces in three dimensions, are, after a certain fixed linear transformation, lozenge tiling limit shapes. The linear transformation in our setup is a shearing transformation of the form
\begin{equation}
\label{eq:shearh}
 (x,y,h) \mapsto (x,y-x-h,h.) 
\end{equation}
This follows from a simple bijection on the level of configurations: see Figure \ref{5to4}.
\begin{figure}
\begin{center}\includegraphics[width=1.7in]{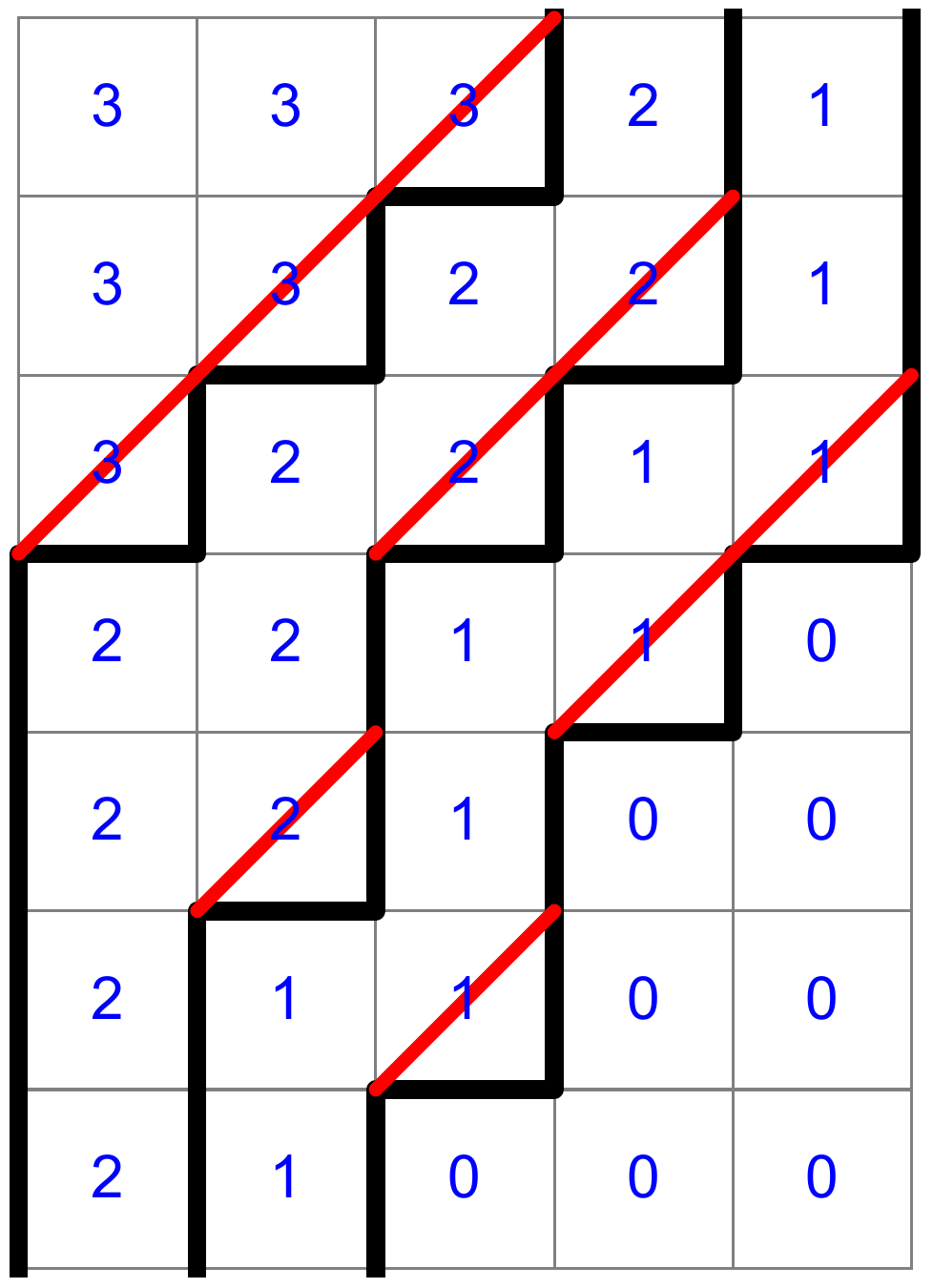}\hskip1cm\includegraphics[width=1.7in]{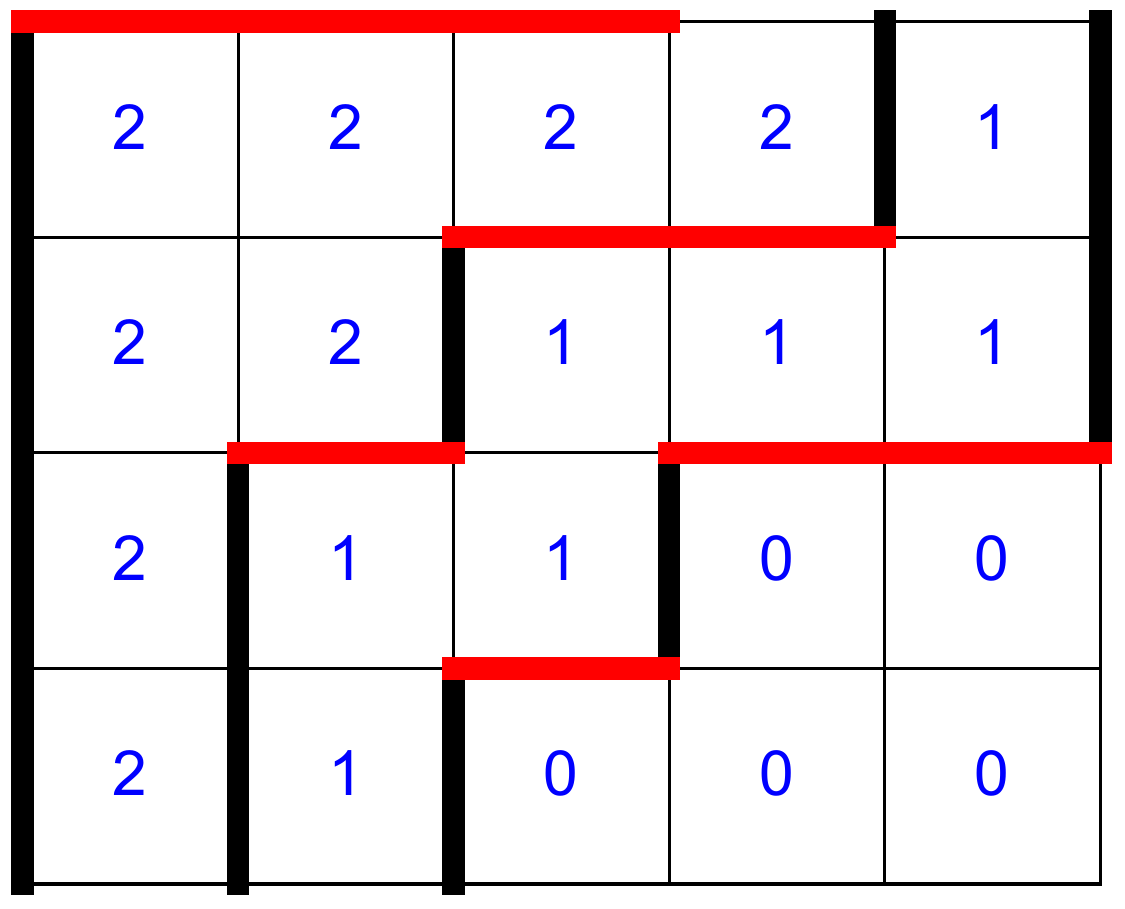}\end{center}
\caption{\label{5to4} Left: a 4-vertex configuration in black, with height function on faces. Each horizontal black edge has a vertical edge immediately to its right; join these with a diagonal edge as shown (red). Then applying a linear map $(x,y,h)\to(x,y-x-h,h)$, the images
of the vertical black edges and diagonal red edges form a standard 
monotone non-intersecting NE lattice path configuration (right), or equivalently, a lozenge tiling configuration. Each red segment replaces two corners in $4$-vertex model; therefore giving a red segment a weight $r^2$, this becomes
a weight-preserving bijection from $4V$ to lozenges with weight $1$ for vertical edges and $r^2$ for horizontal edges
(and $r^2-1$ for empty vertices). }
\end{figure} 

From the five-vertex degeneration point of view, one would expect the surface tension of the four-vertex model to satisfy the trivial potential property. This is indeed the case which we explain below. We find it interesting that a simple linear transform leads to a trivial potential property.

Our conventions for the height functions, see Figure \ref{5to4}, induces the Newton polygons
\[ \tilde N=cvx \{ (0,0),(-1,0),(-1/2,1/2) \}  \quad \text{and} \quad N=cvx \{ (0,0),(-1,0),(0,1) \} \]
for the four-vertex model (left side) and a lozenge tiling model (right side), respectively.

Let $\sigma$ be the surface tension of the lozenge model. This has a representation in terms of the intrinsic $z \in \H$ with harmonic functions $s(z),t(z)$ and $X(z),Y(z)$ such that 
\begin{equation}
\label{eq:harmonicconjugates}
X_z+i \pi t_z=0 \quad Y_z-i \pi s_z=0,
\end{equation}
and the correspondence $\nabla \sigma (s,t)=(X,Y)$, see \cite{KP1}.
The Legendre dual of $\sigma$, the free energy, is denoted by $F(X,Y)$. Its graph $(X,Y,F(X,Y))$ describes the corresponding Wulff shape in $\R^3$.
Because of the above correspondence \eqref{eq:shearh}, the Wulff shape of the 4-vertex model is also a three dimensional shearing of the lozenge tiling Wulff shape, by considering the inverse:
\begin{equation}
\label{eq:shearing}
(X,Y,F) \mapsto (\tilde X,\tilde Y, \tilde F)=(X,Y+X+F,F)
\end{equation}

We record next the effect of this linear change on the surface tension.
The relevant transformation between the Newton polygons $N \to \tilde N$ is
\[ (s,t) \mapsto (\tilde s,\tilde t)=(\frac{s-t}{1+t},\frac{t}{1+t}) \]
The surface tension $\tilde\sigma$ of the 4-vertex model is
\[ \tilde \sigma(\tilde s, \tilde t)=(1-\tilde t) \, \sigma (\frac{\tilde s+ \tilde t}{1-\tilde t},\frac{\tilde t}{1- \tilde t})=\frac{\sigma(s,t)}{1+t}.\]
Indeed, by a direct calculation one verifies that
\[ \tilde \sigma_{\tilde s}(\tilde s, \tilde t)=\sigma_s(s,t)=X \quad \text{and} \quad \tilde \sigma_{\tilde t}(\tilde s, \tilde t)=Y+X+F(X,Y). \]

The same variable $z \in \H$ parametrises both the slopes $(\tilde s,\tilde t)$ and the 
external fields $(\tilde X, \tilde Y)$, by the indicated change of variables. We now verify that $z$ is an 
intrinsic coordinate and the model has a trivial potential. We find that
\[ \tilde X_z +  i \pi (1+t)^2 \tilde t_z=X_z+ i \pi t_z=0,\]
and (using $F_z=F_XX_z+F_YY_z=sX_z+tY_z$)
\[ \tilde Y_z- i \pi (1+t)^2  \tilde s_z=(1+t) (Y_z - i \pi s_z) +(1+s)(X_z+ i \pi t_z)=0,\]
because of \eqref{eq:harmonicconjugates}. This means by Proposition 2.1 of \cite{KP1} that $z$ is an intrinsic coordinate for $\tilde \sigma$ and $\kappa(z)=\sqrt{\det H_{\tilde \sigma}}=\pi(1+t)^2$. Furthermore, since $\kappa^{1/2}(z)=\sqrt{\pi}(1+t(z))$ is a harmonic function, $\tilde \sigma$ has a trivial potential. 

In fact, we observe that the surface tension $\tilde \sigma$ has the same structure as the general large $r$ 5-vertex model (see Theorem 3.2 b of \cite{KP2}): the boundary values of $\sqrt{\det H_{\tilde \sigma}}$ are equal to $\pi$ in the corners $(0,0)$ and $(-1,0)$ and $4 \pi$ in the corner $(-1/2,1/2)$, and the extension inside is dictated by the harmonicity of $\kappa^{1/2}(z)$. 
The behaviours at these two types of corners are referred to as ferroelectric and anti-ferroelectric phases in \cite{burenev2023arctic}.

\begin{rmk}
In the example of \cite{burenev2023arctic}, for the so-called `scalar-product' boundary conditions, the arctic curve is found to be consisting of six arcs, each of them is a portion of certain ellipses. This can be explained via the reduction to lozenge model, as follows. It is easily seen that the transformation \eqref{eq:shearh} deforms the scalar-product boundary conditions to hexagonal (or ``boxed plane-partition'') boundary conditions for the lozenge model. Its well-known arctic curve is the inscribed ellipse to the hexagon. Note that the height function $h$ is affine on each six arcs of the ellipse between tangency points. Thus when we move back to the 4-vertex setting with the inverse of \eqref{eq:shearh}
\[ (x,y) \mapsto (x,y+x+h),
\]   
the single ellipse is transformed to six different ellipses depending on the behaviour of $h$.
Note, however, that the arctic curves viewed as curves in three dimensions are related by a single linear transformation. Only
when projected to the $(x,y)$-plane do the six different linear transformations become apparent.
\end{rmk}

\old{
Next we show that minimisers for the surface tension $\tilde \sigma$ arise under the same shear transformation as in \eqref{eq:shearing} from minimisers of $\sigma$. Let us denote $\psi=1-t$. 
The height function $\tilde h(\tilde x,\tilde y)$ for the surface tension $\tilde \sigma$ can be found by solving a system
\begin{align*}
\tilde h &= \tilde s \tilde x +\tilde t \tilde y + \tilde c\\
0 &= \tilde s_u \tilde x +\tilde t_u \tilde y + \tilde c_u.\\
\end{align*}
Here $s(u)=s(z(u))$, $t(u)=t(z(u))$, with $z(u)$ holomorphic and $\tilde c$ is a ratio of a harmonic function $c$ and $\psi$.
The second equation, similarly as in \eqref{5vlines}, can be rewritten as
\[ s_u \tilde x + t_u \tilde y + c_u - \psi_u \tilde h =0.
\]
Since $\psi=1-t$, $\psi_u=-t_u$ and we can thus rewrite this as
\[ s_u \tilde x + t_u (\tilde y+ \tilde h) + c_u =0.
\]
This means that with the choice
\[ x=\tilde x, \quad y=\tilde y+\tilde h,
\]
we can define a new height function $h=sx+ty+c$ which solves a system corresponding to minimisation for $\sigma$. Observe that
\[\psi \tilde h = \psi (\tilde s \tilde x +\tilde t \tilde y + \tilde c)=s \tilde x+ t \tilde y +c=sx+ty+c- t \tilde h=h+(\psi-1) \tilde h.
\]
We find that $\tilde h=h$.
This means that the shear transformation
\[ (x,y,h) \mapsto (\tilde x,\tilde y,\tilde h)=(x,y-h,h)
\]
maps a minimiser of $\sigma$ to a minimiser of $\tilde \sigma$.
}

\bigskip
\noindent\textbf{Acknowledgements.} 
IP is grateful to the Workshop on ‘Randomness, Integrability, and Universality’, held in Spring 2022 at the Galileo Galilei Institute for Theoretical Physics, as well as to Yale University for hospitality during his visit in December 2022. RK thanks the University of Helsinki for a visit in summer 2023 where some of this
research was performed. We acknowledge Beno\^{\i}t Laslier for a simultaneous observation about the linear relationship between 4-vertex and lozenge limit shapes. We thank the anonymous referees for their valuable comments.

\bibliographystyle{hplain}
\bibliography{trivpotential3}

\end{document}